\documentclass[a4paper,american,reqno]{amsart}

\def\MyAMSFlag{1}

\usepackage{babel}
\usepackage[T1]{fontenc}
\usepackage[utf8]{inputenc}
\usepackage{csquotes}
\usepackage{siunitx}
\usepackage[dvipsnames,table]{xcolor}
\usepackage{amsmath}
\usepackage{amsfonts}
\usepackage{amssymb}
\usepackage{mathtools}
\usepackage{booktabs}
\usepackage{multirow}
\usepackage{enumitem}
\usepackage{mathdots}
\setlist[enumerate,1]{label=(\roman*)}

\if\MyAMSFlag1
\usepackage[style=numeric,
            giveninits=true,
            maxbibnames=100,
            maxcitenames=2,
            isbn=false,
            doi=false,
            url=false,
            isbn=false,
            backend=biber]{biblatex}
\newcommand{\citet}[1]{\citeauthor{#1}~\cite{#1}}

\fi
\usepackage[colorlinks,
            citecolor=blue,
            urlcolor=blue,
            linkcolor=blue]{hyperref} % should always be the last package

\if\MyAMSFlag1
\newtheorem{theorem}{Theorem}

\newtheorem{lemma}[theorem]{Lemma}

\newtheorem{proposition}[theorem]{Proposition}

\fi

\newcommand{\DecisionProblemPrefix}[1]{}

\newcommand{\MacroColor}[1]{#1}

\newcommand{\Integers}{\mathbb{Z}}

\newcommand{\NonNegativeIntegers}{\mathbb{N}}
\newcommand{\PositiveIntegers}{\mathbb{N}_{+}}
\newcommand{\BinarySet}{\mathbb{B}}
\newcommand{\DomainPlaceholder}{\mathbb{D}}

\newcommand{\ZeroVector}{\mathbf{0}}
\newcommand{\OneVector}{\mathbf{1}}

\newcommand{\ZeroMatrix}{O}

\DeclareMathOperator*{\Argmin}{\MacroColor{argmin}}
\DeclareMathOperator*{\Argmax}{\MacroColor{argmax}}

\newcommand{\OptValFunc}{\MacroColor{V}}
\newcommand{\OptSolSet}{\MacroColor{\mathcal{S}}}
\newcommand{\FeasSolSet}{\MacroColor{\mathcal{F}}}

\newcommand{\BILPConstraintSense}{\ge}

\newcommand{\ComplexityClassWrapper}[1]{\MacroColor{\ensuremath{#1}}}
\newcommand{\ComplexityClassText}[1]{\textup{#1}}

\newcommand{\ClassP}{\ComplexityClassWrapper{\ComplexityClassText{P}}}
\newcommand{\ClassNP}{\ComplexityClassWrapper{\ComplexityClassText{NP}}}

\newcommand{\ClassDP}{\ComplexityClassWrapper{\ComplexityClassText{DP}}}
\newcommand{\ClassBH}[1]{\ComplexityClassWrapper{\ComplexityClassText{BH}_{#1}(\ClassNP{})}}

\newcommand{\ClassSigmaP}[1]{\ComplexityClassWrapper{\Sigma^{p}_{#1}}}
\newcommand{\ClassDeltaP}[1]{\ComplexityClassWrapper{\Delta^{p}_{#1}}}
\newcommand{\ClassThetaP}[1]{\ComplexityClassWrapper{\Theta^{p}_{#1}}}

\newcommand{\ComplexityProblemWrapper}[1]{\MacroColor{\ensuremath{#1}}}
\newcommand{\ComplexityProblemText}[1]{\textup{#1}}

\newcommand{\DecisionProblemBilevelIntegerLP}{\ComplexityProblemWrapper{\ComplexityProblemText{DEC}({\Integers})}}
\newcommand{\DecisionProblemBilevelNonNegativeIntegerLP}{\ComplexityProblemWrapper{\ComplexityProblemText{DEC}({\NonNegativeIntegers})}}
\newcommand{\DecisionProblemBilevelBinaryLP}{\ComplexityProblemWrapper{\ComplexityProblemText{DEC}({\BinarySet})}}
\newcommand{\DecisionProblemBilevelLPWithDomainPlaceholder}{\ComplexityProblemWrapper{\ComplexityProblemText{DEC}({\DomainPlaceholder})}}

\newcommand{\DecisionProblemFeasibilityOfBilevelIntegerLP}{\ComplexityProblemWrapper{\ComplexityProblemText{FEAS}({\Integers})}}
\newcommand{\DecisionProblemFeasibilityOfBilevelNonNegativeIntegerLP}{\ComplexityProblemWrapper{\ComplexityProblemText{FEAS}({\NonNegativeIntegers})}}
\newcommand{\DecisionProblemFeasibilityWithDomainPlaceholder}{\ComplexityProblemWrapper{\ComplexityProblemText{FEAS}({\DomainPlaceholder})}}

\newcommand{\DecisionProblemPricingBilevelIntegerLP}{\ComplexityProblemWrapper{\ComplexityProblemText{DEC}'({\Integers})}}
\newcommand{\DecisionProblemPricingBilevelNonNegativeIntegerLP}{\ComplexityProblemWrapper{\ComplexityProblemText{DEC}'({\NonNegativeIntegers})}}
\newcommand{\DecisionProblemPricingBilevelBinaryLP}{\ComplexityProblemWrapper{\ComplexityProblemText{DEC}'({\BinarySet})}}
\newcommand{\DecisionProblemPricingBilevelLPWithDomainPlaceholder}{\ComplexityProblemWrapper{\ComplexityProblemText{DEC}'({\DomainPlaceholder})}}

\newcommand{\EqTagDecisionProblemOnKLP}[2]{\ComplexityProblemWrapper{#1\ComplexityProblemText{-#2}}}

\newcommand{\DecisionProblemFeas}[1]{\hyperref[LabelDecisionProblemFeas]{\EqTagDecisionProblemOnKLP{#1}{FEAS}}}
\newcommand{\DecisionProblemVal}[1]{\hyperref[LabelDecisionProblemVal]{\EqTagDecisionProblemOnKLP{#1}{VAL}}}
\newcommand{\DecisionProblemUnb}[1]{\hyperref[LabelDecisionProblemUnb]{\EqTagDecisionProblemOnKLP{#1}{UNB}}}
\newcommand{\DecisionProblemRecognize}[1]{\hyperref[LabelDecisionProblemRecognize]{\EqTagDecisionProblemOnKLP{#1}{RECOG}}}
\newcommand{\DecisionProblemAttain}[1]{\hyperref[LabelDecisionProblemAttain]{\EqTagDecisionProblemOnKLP{#1}{ATAIN}}}
\newcommand{\SearchProblemObj}[1]{\hyperref[LabelSearchProblemObj]{\EqTagDecisionProblemOnKLP{#1}{OBJ}}}
\newcommand{\SearchProblemSol}[1]{\hyperref[LabelSearchProblemSol]{\EqTagDecisionProblemOnKLP{#1}{SOL}}}

\newcommand{\NVariables}{\MacroColor{n}}
\newcommand{\NConstraints}{\MacroColor{m}}

\newcommand{\DecisionVersionThreshold}{\MacroColor{W}}

\newcommand{\SATTruthAssignment}{\MacroColor{\nu}}

\newcommand{\EqTagGeneralKLP}[3]{\MacroColor{\ensuremath{\mathrm{LP}_{\ensuremath{#1}}^{\ensuremath{#2}}(\ensuremath{#3})}}}
\newcommand{\EqrefGeneralKLP}[3]{\textup{(}\text{\hyperref[LabelGeneralKLP]{$\EqTagGeneralKLP{#1}{#2}{#3}$}\textup{)}}}

\newcommand{\EqTagJeroslowPenaltyBefore}[3]{\MacroColor{\ensuremath{\mathrm{J}_{\ensuremath{#1}}^{\ensuremath{#2}}(\ensuremath{#3})}}}
\newcommand{\EqrefJeroslowPenaltyBefore}[3]{\textup{(}\text{\hyperref[LabelJeroslowPenaltyBefore]{$\EqTagJeroslowPenaltyBefore{#1}{#2}{#3}$}\textup{)}}}

\newcommand{\EqTagJeroslowPenaltyAfter}[3]{\MacroColor{\ensuremath{\overline{\mathrm{J}}_{\ensuremath{#1}}^{\ensuremath{#2}}(\ensuremath{#3})}}}
\newcommand{\EqrefJeroslowPenaltyAfter}[3]{\textup{(}\text{\hyperref[LabelJeroslowPenaltyAfter]{$\EqTagJeroslowPenaltyAfter{#1}{#2}{#3}$}\textup{)}}}

\newcommand{\EqTagQThreeSATReducedToKLPLeadersProblem}[3]{\MacroColor{\ensuremath{\textup{QLP}_{\ensuremath{#1}}^{\ensuremath{#2}}\ensuremath{#3}}}}
\newcommand{\EqTagQThreeSATReducedToKLPFollowersProblem}[3]{\MacroColor{\ensuremath{\textup{QLP}_{\ensuremath{#1}}^{\ensuremath{#2}}\ensuremath{#3}}}}

\newcommand{\EqRefQThreeSATReducedToKLPLeadersProblem}[3]{\textup{(}\text{\hyperref[eq:LabelQThreeSATReducedToKLPLeadersProblem]{$\EqTagQThreeSATReducedToKLPLeadersProblem{#1}{#2}{#3}$}\textup{)}}}
\newcommand{\EqRefQThreeSATReducedToKLPFollowersProblem}[3]{\textup{(}\text{\hyperref[eq:LabelQThreeSATReducedToKLPLeadersProblem]{$\EqTagQThreeSATReducedToKLPFollowersProblem{#1}{#2}{#3}$}\textup{)}}}

\newcommand{\NewEmph}[1]{\textbf{#1}}

\newcommand{\BILPIntegers}{\textup{BILP}($\Integers$)}
\newcommand{\BILPNonNegativeIntegers}{\textup{BILP}($\NonNegativeIntegers$)}
\newcommand{\BILPBinarySet}{\textup{BILP}($\BinarySet$)}
\newcommand{\BILPWithDomainPlaceholder}{\textup{BILP(}$\DomainPlaceholder$\textup{)}}

\newcommand{\BILPSetOfBooleanFormulae}{\mathcal{B}}
\newcommand{\BILPBooleanFormula}{H}
\newcommand{\BILPQFreeBooleanFormula}{F}
\newcommand{\BILPSetOfThreeCNF}{\mathcal{F}}
\newcommand{\BILPSATNVar}{r}
\newcommand{\BILPSATNClause}{q}

\bibliography{references}

\title[On the Complexity of Bilevel Integer Linear Programming]{On the Computational Complexity of \\ Bilevel Integer Linear Programming}
\author{Nagisa Sugishita}
\author{Margarida Carvalho}
\address[Nagisa Sugishita, Margarida Carvalho]{André-Aisenstadt Pavillon, 2920 Tour Road, Montreal, Quebec H3T 1N8, Canada}
\email{nagisa.sugishita@hec.ca}

\date{\today}

\begin{document}

\begin{abstract}
We investigate the computational complexity of bilevel integer linear programming. 
While Jeroslow~(1985) established that the decision version of this problem is $\Sigma^p_2$-complete when restricted to binary variables, we prove that this $\Sigma^p_2$-completeness persists even for general integer variables, settling a question that remained open for over 40 years.
Furthermore, we analyze the impact of various structural assumptions on computational complexity. 
Notably, we strengthen the result of Köppe et al.~(2010) by proving polynomial-time solvability whenever the total number of upper- and lower-level variables is fixed, without any additional assumptions.
\end{abstract}

\maketitle

\section{Introduction}\label{sec:introduction}

Bilevel programming is a hierarchical optimization problem in which the upper-level player makes the decision first, after which the lower-level player responds.
Due to its modeling flexibility, bilevel programming has attracted considerable attention in the literature. We refer the reader to \citet{carvalho2025integer} and \citet{beck2026bilevel} for recent surveys.
In this paper, we consider the optimistic formulation, in which, if the lower-level problem admits multiple optimal solutions, the one most favorable to the upper-level player is selected.

The most fundamental variant, bilevel linear programming (LP), has been extensively studied.
\citet{Jeroslow1985} showed that its decision version is \ClassNP{}-hard, while \citet{Buchheim2023} proved that it belongs to \ClassNP{}, establishing \ClassNP{}-completeness.
\citet{Sugishita2026SingleUpperLevelVariable} showed that the problem remains \ClassNP{}-complete even when there is only a single upper-level variable and no upper-level constraints.
In contrast, fixing the number of lower-level variables or lower-level constraints renders the problem tractable, as summarized by \citet{ketkov2025note}.

The computational complexity of bilevel binary LP appears higher than that of bilevel LP due to the binary constraints.
\citet{Jeroslow1985} showed that the decision version of bilevel binary LP is \ClassSigmaP{2}-complete, and Sugishita and Carvalho~\cite{Sugishita2026,sugishita2026price} extended the result to the bilevel mixed binary LP case.
Moreover, several bilevel knapsack problems have been shown to be \ClassSigmaP{2}-complete, some of which imply the \ClassSigmaP{2}-completeness of bilevel binary LP with a single lower-level constraint~\cite{caprara2014study}.
Interdiction problems have also been extensively studied in this context~\cite{nabli2022complexity,boggio2025completeness,grune2026complexity}.

Less is known about the computational complexity of bilevel integer LP.
On the membership side, we prove that the decision version of bilevel integer LP is in \ClassSigmaP{2}. 
Together with the hardness result of \citet{Jeroslow1985}, this implies its \ClassSigmaP{2}-completeness.
We also examine the problem under various structural assumptions.
For example, the problem belongs to \ClassNP{} if the number of lower-level variables is fixed.
However, on the hardness side, we show that the problem remains \ClassSigmaP{2}-complete even when the number of upper-level variables is fixed.
We note that \citet{koppe2010parametric} proved polynomial-time solvability when the total number of upper- and lower-level variables is fixed and both feasible sets are bounded.
We strengthen this result by showing that polynomial-time solvability still holds without the boundedness assumption.
In this paper, we also establish, as an auxiliary result, the hardness of a minimax knapsack problem with a single shared capacity constraint.
Table~\ref{tab:summary} summarizes some of the main results of this paper.
Finally, to demonstrate the flexibility of our analysis, we study a pricing variant of bilevel integer LP.
In particular, we show the \ClassSigmaP{2}-completeness of its decision version.

\begin{table}
\caption{%
Computational complexity of the decision versions of bilevel LP with integer, nonnegative integer, binary, and continuous variables, alongside special cases where the numbers of upper-level variables $n_1$, lower-level variables $n_2$, or lower-level constraints $m_2$ are fixed to some positive constant $k$.
The results in bold font are established in this paper.}
\label{tab:summary}
\small
\setlength{\tabcolsep}{3pt}
\begin{tabular}{ccccccc}
% & & & & Upper-level & Lower-level & Lower-level \\
Domain & & General & $n_1 = k$ & $n_2 = k$ & $m_2 = k$ & $n_1 + n_2 = k$ \\
\midrule
$\Integers$ & & \NewEmph{\ClassSigmaP{2}-comp.} & \NewEmph{\ClassSigmaP{2}-comp.} & \NewEmph{\ClassNP{}-comp.} & \NewEmph{\ClassNP{}-comp.} & \NewEmph{\ClassP{}} \\%\cite{koppe2010parametric} \\
$\NonNegativeIntegers$ & & \NewEmph{\ClassSigmaP{2}-comp.} & \NewEmph{\ClassSigmaP{2}-comp.} & \NewEmph{\ClassNP{}-comp.} & \NewEmph{\ClassSigmaP{2}-comp.} & \NewEmph{\ClassP{}} \\%\cite{koppe2010parametric} \\
$\BinarySet$ & & \ClassSigmaP{2}-comp. \cite{Jeroslow1985} & \NewEmph{\ClassDeltaP{2}-comp.} & \NewEmph{\ClassNP{}-comp.} & \NewEmph{\ClassSigmaP{2}-comp.} & \ClassP{} \cite{koppe2010parametric} \\
$\mathbb{R}$ 
% \cite{Buchheim2023,Jeroslow1985,ketkov2025note,Sugishita2026SingleUpperLevelVariable} 
& & \ClassNP{}-comp. \cite{Jeroslow1985,Buchheim2023} & \ClassNP{}-comp. \cite{Sugishita2026SingleUpperLevelVariable} & \ClassP{} \cite{ketkov2025note} & \ClassP{} \cite{ketkov2025note} & P \cite{ketkov2025note} \\
\bottomrule
\end{tabular}
\end{table}

\textbf{Paper Structure.}
This paper is organized as follows.
In Section~\ref{sec:Bilevel}, we introduce the notations and problem setup.
We also establish the \ClassSigmaP{2}-completeness of bilevel integer LP.
Section~\ref{sec:FixedDimensions} concerns the fixed-parameter cases.
Section~\ref{sec:Pricing} investigates the pricing variants and extends our results to them.

\textbf{Conventions.}
Given an optimization instance $P$, we use $\OptValFunc(P)$,
% \MargaridaSideComment{Maybe we should change the notation $v(p)$ to $V(P)$ because we use $v$ a lot as a binary variable.}
$\FeasSolSet(P)$, and $\OptSolSet(P)$ to denote the optimal objective value, the set of feasible solutions, and the set of optimal solutions, respectively.
We write $\Integers = \{0, \pm 1, \pm 2, \ldots\}$, $\NonNegativeIntegers{} = \{0, 1, \ldots\}$, $\PositiveIntegers{} = \{1, 2, \ldots\}$, and $\BinarySet = \{0, 1\}$.
We use $\ZeroVector$ (respectively, $\OneVector$) to denote the all-zeros (respectively, all-ones) vector, with the dimension being clear from the context.
Similarly, we use $\ZeroMatrix$ to denote the zero matrix of an appropriate size.
We use subscripts to denote elements of a vector: for a vector $v$, the $i$th element is denoted by $v_i$.
To reduce clutter, we sometimes refer to column vectors inline;
given $u \in \mathbb{R}^{d_1}$ and $v \in \mathbb{R}^{d_2}$, we use
the notation $(u, v)$ to denote the column vector
$
\begin{pmatrix} u \\ v \end{pmatrix} \in \mathbb{R}^{d_1 + d_2}.
$

\section{Bilevel Integer Linear Programming}
\label{sec:Bilevel}

In this section, we introduce the notations and problem setup.
We then prove that the decision version of bilevel integer LP is \ClassSigmaP{2}-complete.

\subsection{Notation and Problem Definition}
\label{sec:Bilevel:NotationAndProblemDefinition}

In this paper, we use \BILPIntegers{}, \BILPNonNegativeIntegers{}, and \BILPBinarySet{} to denote bilevel integer LP with unrestricted integer variables, bilevel integer LP with nonnegative integer variables, and bilevel binary LP, respectively.
We give a precise definition of the problem below.

Let $n_l \in \NonNegativeIntegers{}$ and $m_l \in \NonNegativeIntegers{}$ for each $l \in \{1, 2\}$.
% Let $A_{1 1} \in \Integers^{\NConstraints_1 \times n_1}$, $A_{1 2} \in \Integers^{\NConstraints_1 \times n_2}$, $A_{2 1} \in \Integers^{\NConstraints_2 \times n_1}$, $A_{2 2} \in \Integers^{\NConstraints_2 \times n_2}$, $b_1 \in \Integers^{\NConstraints_1}$, $b_2 \in \Integers^{\NConstraints_2}$, $c_{1 1} \in \Integers^{\NVariables_1}$, $c_{1 2} \in \Integers^{\NVariables_2}$, and $c_{2 2} \in \Integers^{\NVariables_2}$.
For each $l \in \{1, 2\}$ and $i \in \{1, 2\}$, let $A_{l i} \in \Integers^{\NConstraints_l \times \NVariables_i}$, $b_l \in \Integers^{\NConstraints_l}$, and $c_{l i} \in \Integers^{\NVariables_i}$.
We collectively write $A = ( A_{11}, A_{12}, A_{21}, A_{22} )$, $b = ( b_{1}, b_{2} )$, and $c = ( c_{1 1}, c_{1 2}, c_{2 2} )$.
For $\DomainPlaceholder \in \{\Integers, \NonNegativeIntegers, \BinarySet\}$, a \BILPWithDomainPlaceholder{} instance is given by
\begin{align}
\min_{x_1, x_2}
\left\{
c_{1 1}^{\top} x_1 + c_{1 2}^{\top} x_2
:
\begin{array}{l}
A_{1 1} x_1 + A_{1 2} x_2
\ge
b_1,
x_1 \in \DomainPlaceholder^{n_1},
\\
\displaystyle
x_2
\in
\Argmin_{x_2'}
\left\{
c_{2 2}^{\top} x_2'
:
\begin{array}{l}
A_{2 1} x_1 + A_{2 2} x_2'
\ge
b_2,
\\
x_2' \in \DomainPlaceholder^{n_2}
\end{array}
\right\}
\end{array}
\right\}.
% \tag{\ensuremath{\text{\EqTagDefBILP}(A, b, c)}}
\label{Eq:BILP}
\end{align}

We use inequality constraints in~\eqref{Eq:BILP} to maintain a unified notation across \BILPIntegers{}, \BILPNonNegativeIntegers{}, and \BILPBinarySet{}.
In the integer LP literature, it is also common to use an equality-constrained problem with nonnegative integer variables.
All the results regarding \BILPNonNegativeIntegers{} in this paper
%, including those in Section~\ref{sec:FixedDimensions}, 
hold if we use $=$ instead of $\ge$ in~\eqref{Eq:BILP}. 
However, \BILPIntegers{} with equality constraints is polynomially solvable, since the feasible set is a lattice coset and the problem is tractable via Hermite normal form.

An upper-level constraint is termed a linking (coupling) constraint if and only if it involves the lower-level variable $x_2$.
One can show that all non-linking constraints can be ``moved'' to the lower-level problem without altering the optimal solutions~\cite{Sugishita2026}.
In particular, if there are no linking constraints (i.e., $A_{1 2} = \ZeroMatrix$), one can obtain an equivalent instance without upper-level constraints (i.e., $m_1 = 0$).

We can view a single-level instance as a special case of a bilevel instance where the lower-level problem has no variables or constraints (i.e., $\NVariables_2 = \NConstraints_2 = 0$).
A bilevel instance without upper-level variables or constraints (i.e., $\NVariables_1 = \NConstraints_1 = 0$) reduces to the optimization over the optimal solution set of the lower-level problem:
\begin{equation}
\label{Eq:BilevelWithoutUpperLevelVariablesOrConstraints}
\min_{x_2}
\left\{
c_{1 2}^{\top} x_2
:
x_2
\in
\Argmin_{x_2'}
\{
c_{2 2}^{\top} x_2' :
A_{2 2} x_2'
\BILPConstraintSense
 b_2,
x_2' \in \DomainPlaceholder^{\NVariables_2}
\}
\right\}.
\end{equation}
Another special case is a minmax problem (i.e., $c_{1 2} = - c_{2 2}$).
When dealing with a minmax problem lacking upper-level constraints (i.e., $\NConstraints_1 = 0$), we write~\eqref{Eq:BILP} as
\begin{align}
\label{Eq:MinMaxWithoutUpperLevelConstraints}
\min_{x_1 \in \DomainPlaceholder^{\NVariables_1}}
\left\{
c_{1 1}^{\top} x_1 + \max_{x_2}
\left\{
c_{1 2}^{\top} x_2
:
% \begin{array}{l}
A_{2 1} x_1 + A_{2 2} x_2
\ge
b_2,
x_2 \in \DomainPlaceholder^{n_2}
% \end{array}
\right\}
% :
% A_{1 1} x_1 \ge b_1
\right\}.
\end{align}
However, this should be understood as a notational convenience, with the actual definition being~\eqref{Eq:BILP}.
In particular, any $x_1$ for which the lower-level problem is infeasible should be considered infeasible for the bilevel instance.
Furthermore, if the lower-level problem is infeasible for all $x_1 \in \DomainPlaceholder^{\NVariables_1}$, the optimal objective value of the bilevel instance should be understood to be $\infty$.

For each $\DomainPlaceholder \in \{\Integers, \NonNegativeIntegers, \BinarySet\}$, we consider the decision version of \BILPWithDomainPlaceholder{}:
\begin{align}
&
\parbox{0.8\textwidth}{Given $(A, b, c)$ and an integer $\DecisionVersionThreshold$, does there exist a feasible solution for~\eqref{Eq:BILP} whose objective value is less than or equal to $\DecisionVersionThreshold$?}
\tag{\DecisionProblemBilevelLPWithDomainPlaceholder{}}
\label{Eq:BILP-VAL}
\end{align}

To capture the computational complexity precisely, we give particular consideration to instances satisfying the following restrictions.
% \NagisaSideComment{We used (C2) "the coefficients are at most $n$" in the paper for multilevel (continuous) LP. But some of our results (i.e, Prop.~\ref{Prop:FixedLowerLevel}) in this paper need a constant bound, so I chose $1$.}
\begin{enumerate}[label=(C\arabic*)]
\item
\label{Condition1}
There are no linking constraints, i.e., $A_{1 2} = \ZeroMatrix$;
\item
\label{Condition2}
All entries in $A$, $b$, $c$, and $\DecisionVersionThreshold$ are integers between $-1$ and $1$;
\item
\label{Condition3}
It is a minmax problem, i.e., $c_{1 2} = -c_{2 2}$.
\end{enumerate}

\citet{Jeroslow1985} establishes the following result.
\begin{theorem}[\citet{Jeroslow1985}]
\label{theorem:ComplexityOfBilevelBinaryLP}
The decision problem \DecisionProblemBilevelBinaryLP{} is \ClassSigmaP{2}-complete. Moreover, this remains valid even when the input is restricted to satisfy conditions~\ref{Condition1}--\ref{Condition3}.
\end{theorem}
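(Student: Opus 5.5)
Membership and hardness are handled separately, and hardness goes through a reduction from $\exists\forall$-3SAT that produces instances satisfying \ref{Condition1}--\ref{Condition3}.

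\emph{Membership in \ClassSigmaP{2}.} With binary variables every candidate pair $(x_1,x_2)\in\BinarySet^{n_1}\times\BinarySet^{n_2}$ has polynomial size. The instance is a yes-instance if and only if there exist $x_1,x_2$ such that four conditions hold: the upper-level constraints hold, the lower-level constraints hold, $c_{11}^\top x_1+c_{12}^\top x_2\le\DecisionVersionThreshold$, and for all $x_2'\in\BinarySet^{n_2}$ with $A_{21}x_1+A_{22}x_2'\ge b_2$ we have $c_{22}^\top x_2'\ge c_{22}^\top x_2$. All checks after the quantifiers are polynomial, so this is an $\exists\forall$ statement and the problem lies in \ClassSigmaP{2}.

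\emph{Hardness.} I reduce from the \ClassSigmaP{2}-complete problem $\exists u\,\forall w\,\neg F(u,w)$, where $F$ is a 3-CNF formula; equivalently, the complement of a 3-DNF condition, which is what fits a minmax formulation. The leader chooses $u\in\BinarySet^{r}$. The follower maximizes and tries to exhibit an assignment $w$ that satisfies every clause of $F$ under $u$. The follower's variables are $w\in\BinarySet^{r'}$ together with one indicator $z\in\BinarySet$, subject to the clause constraints
\[
\textstyle\sum_{\ell\in C_j}\ell(u,w)\ \ge\ z \quad\text{for every clause } C_j .
\]
A negated literal $1-v$ is handled by moving the constant to the right-hand side, so $b$ may take values outside $\{-1,0,1\}$. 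The follower maximizes $z$. The leader minimizes the same value, which gives $c_{12}=-c_{22}$, and the threshold is $\DecisionVersionThreshold=0$. The lower level is always feasible via $z=0$. Its optimum is $1$ exactly when some $w$ satisfies $F(u,\cdot)$. Hence a leader choice of value $\le 0$ exists if and only if $\exists u\,\forall w\,\neg F(u,w)$. There are no upper-level constraints, so \ref{Condition1} holds, and \ref{Condition3} holds by construction.

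\emph{Main obstacle: condition \ref{Condition2}.} The right-hand sides $b_j$ must lie in $\{-1,0,1\}$, but the constant term of a clause row can be as negative as $-3$. I would fix this by introducing, for each variable $v$, a complement copy $\bar v$ with $v+\bar v=1$, written as two inequalities $v+\bar v\ge 1$ and $-v-\bar v\ge -1$. For leader variables these linking rows go into the lower level, which is allowed since they involve only $x_1$ via $A_{21}$. If they fail, the lower level is infeasible and the leader choice is infeasible, which is harmless. Every literal is then a single variable with coefficient $1$, and each clause row reads $\sum(\text{vars})-z\ge 0$. All entries then lie in $\{-1,0,1\}$, so \ref{Condition2} holds. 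The remaining work is to check that these complement rows keep the equivalence intact; in particular, the follower cannot set $\bar w$ inconsistently with $w$, because the equalities force $\bar w=1-w$.
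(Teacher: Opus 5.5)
Your proposal is correct and follows essentially the paper's route. Membership uses the standard $\exists (x_1,x_2)\,\forall x_2'$ characterization, and hardness uses exactly the minmax instance \eqref{Eq:Jeroslow}, where the follower maximizes a single indicator that can equal $1$ only if some assignment of the universal variables satisfies every clause of $F(s,\cdot)$. The only difference is how you enforce \ref{Condition2}: the paper adds auxiliary variables fixed to $1$ and writes $b_F$ in unary as in \eqref{SAT:entresin01}, whereas you use complement copies forced by $v+\bar v=1$; both devices work.
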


We now present one of our main results.
\begin{theorem}
\label{theorem:ComplexityOfBilevelIntegerLP}
The decision problems \DecisionProblemBilevelIntegerLP{} and \DecisionProblemBilevelNonNegativeIntegerLP{} are \ClassSigmaP{2}-complete.
Moreover, this remains valid even when the input is restricted to satisfy conditions~\ref{Condition1}--\ref{Condition3}.
\end{theorem}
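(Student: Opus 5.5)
Hardness is the easy half, and I will dispose of it first. A \BILPBinarySet{} instance satisfying \ref{Condition1}--\ref{Condition3} can be encoded over $\Integers$ or $\NonNegativeIntegers$ by adding the bounds $0 \le x_1 \le \OneVector$ and $0 \le x_2 \le \OneVector$. Because \ref{Condition1} lets upper-level bounds be moved into the lower level, I will put the bounds on both $x_1$ and $x_2$ into the lower-level system, i.e.\ append the rows $\pm I$ to $A_{21}$ and $A_{22}$. Then every $x_1 \notin \BinarySet^{n_1}$ makes the lower level infeasible, so such $x_1$ is infeasible for the bilevel instance. For $x_1 \in \BinarySet^{n_1}$ the lower-level feasible set coincides with the binary one. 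All new entries lie in $\{-1,0,1\}$, we keep $A_{12} = \ZeroMatrix$, and the objectives are unchanged. Hence \ref{Condition1}--\ref{Condition3} are preserved, and Theorem~\ref{theorem:ComplexityOfBilevelBinaryLP} yields $\ClassSigmaP{2}$-hardness of \DecisionProblemBilevelIntegerLP{} and \DecisionProblemBilevelNonNegativeIntegerLP{} even under those restrictions.

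For membership in $\ClassSigmaP{2}$, I aim for the certificate form ``there exist polynomial-size $(x_1,x_2)$ such that for all polynomial-size $x_2'$: $x_1,x_2$ are feasible, $c_{11}^\top x_1 + c_{12}^\top x_2 \le \DecisionVersionThreshold$, and $x_2'$ feasible for the lower level implies $c_{22}^\top x_2' \ge c_{22}^\top x_2$.'' Two issues make this nontrivial. First, the lower level might be unbounded for the chosen $x_1$. In that case there is no optimal $x_2$, and since the universal check only ranges over polynomial-size $x_2'$, it could miss the unboundedness. Second, the witnesses $(x_1,x_2)$ and the relevant lower-level competitors $x_2'$ must be shown to admit polynomially bounded encoding length.

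For the second issue I will use standard integer programming bounds. The first is the Borosh--Treybig / Papadimitriou small-solution bound: if an integer system $Ax \ge b$ has a solution of a given form, it has one of size polynomial in the encoding. The second is the fact that an integer program that is feasible and bounded has an optimal solution of polynomial size. Unboundedness has a clean certificate: a feasible integer point plus an integer recession direction $d$ with $A_{22}d \ge 0$ and $c_{22}^\top d < 0$. By Meyer's theorem, the convex hull of the integer points of a rational polyhedron is a rational polyhedron, so an integer program that is feasible with unbounded LP relaxation in an improving recession direction is unbounded. Since the recession cone does not depend on $x_1$, unboundedness is decided by the existence of such a $d$ together with feasibility. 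To handle it, I will add $d$ to the universal part: for all small $d$ with $A_{22}d\ge 0$, require $c_{22}^\top d \ge 0$. This check is correct because a rational cone containing an improving direction contains a small integer one.

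The main obstacle is the existential witness. I must show that if some feasible bilevel pair has value at most $\DecisionVersionThreshold$, then one exists with polynomially bounded $x_1$. For that I plan to use parametric integer programming structure, namely the lower-level value function of $x_1$ being piecewise-affine over finitely many lattice-periodic regions, via Cook--Gerards--Schrijver--Tardos proximity. My first attempt will be to reduce to checking whether a combined system, the upper-level constraints, the lower-level constraints, and ``$c_{22}^\top x_2 \le$ value function at $x_1$,'' has a small solution. I will bound the value function by the Gomory/Chvátal-type representation (Blair--Jeroslow), whose pieces have polynomially bounded coefficients. I will then apply the small-solution theorem to each piece.
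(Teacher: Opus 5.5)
Your hardness reduction is fine. It makes explicit what the paper compresses into ``hardness follows from Theorem~\ref{theorem:ComplexityOfBilevelBinaryLP}''. Your $\exists (x_1,x_2)\,\forall x_2'$ certificate is also the same as the paper's guess-then-query-the-oracle machine.

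The gap is in the one step that carries membership. You must show that whenever a bilevel-feasible pair with value at most $W$ exists, one of polynomial encoding size exists; this is the paper's Proposition~\ref{prop:EstimateOfOptimalObjectiveValue}. You only announce a ``first attempt'', and it rests on an unsupported claim: that the Blair--Jeroslow Gomory-function representation of the lower-level value function has pieces with polynomially bounded coefficients, to which the small-solution theorem applies. Those pieces are nested roundings of linear forms, not linear systems. To feed them to a small-solution theorem you must linearize every rounding with an auxiliary integer variable, so what matters is the size and depth of the representation. The Chv\'atal-closure construction behind Blair--Jeroslow gives depth tied to the Chv\'atal rank, which can be exponential in the encoding length already for polytopes in the plane. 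You give no argument that a polynomial-size representation exists, so membership in \ClassSigmaP{2} is not established.

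The missing idea is a lower-level test set that does not depend on the right-hand side (hence not on $x_1$) and whose elements each have polynomial encoding size, even if there are exponentially many of them. The paper puts the lower level in equality form with nonnegative variables and uses such a test set $T$: $x_2$ is lower-level optimal iff it is feasible and $x_2+t\not\ge \mathbf{0}$ for every $t\in T$. Selecting, for each $t$, one coordinate that becomes negative writes the bilevel feasible set as a finite union of sets of integer points of polyhedra. Each polyhedron has polynomially many variables and is described by inequalities of polynomial size. The standard bound on vertices and rays of integer hulls then gives a polynomial-size feasible (or optimal) point in every nonempty piece, uniformly over all pieces.

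The Cook--Gerards--Schrijver--Tardos paper you cite would also close the gap if used this way rather than for value-function structure. A feasible non-optimal point of $\min\{c^\top y : Ay \ge b,\ y\in\mathbb{Z}^n\}$ admits a strictly better feasible point within $\ell_\infty$-distance $n\Delta$, where $\Delta$ is the largest absolute subdeterminant of $A$. The resulting difference vectors form exactly such a test set of polynomially sized vectors.

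Finally, your worry about lower-level unboundedness is unnecessary. If any $x_2'$ beats $x_2$, then the integer system $A_{22}x_2' \ge b_2 - A_{21}x_1$, $c_{22}^\top x_2' \le c_{22}^\top x_2 - 1$ is feasible. It therefore has a solution polynomial in its own encoding size, which is polynomial once $x_1,x_2$ are. Your extra universal check over recession directions $d$ is harmless but redundant.
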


The hardness follows from Theorem~\ref{theorem:ComplexityOfBilevelBinaryLP}.
In this paper, we establish the membership result in Theorem~\ref{theorem:ComplexityOfBilevelIntegerLP}.

\subsection{Reformulations of Boolean Satisfiability Problems}
\label{sec:BILPFormulationsBooleanSatisfiabilityProblems}

In this section, we discuss Boolean satisfiability problems and their formulations as optimization instances.
Let $\BILPSetOfThreeCNF$ be the set of Boolean formulae in 3-conjunctive normal form (3CNF).
It is known that the following is \ClassNP{}-complete \cite{papadimitriou1993computational}:
\begin{equation}
\tag{3SAT}
\label{Eq:3SAT}
\parbox{0.8\textwidth}{Given $\BILPQFreeBooleanFormula \in \BILPSetOfThreeCNF$, is $\BILPQFreeBooleanFormula$ satisfiable?}
\end{equation}
Let \( \BILPQFreeBooleanFormula \in \BILPSetOfThreeCNF \) with \( \BILPSATNVar \) variables \( \SATTruthAssignment{}_{1}, \ldots, \SATTruthAssignment{}_{\BILPSATNVar} \) and \( \BILPSATNClause \) clauses.
Following \cite{marcotte2005bilevel}, \( \BILPQFreeBooleanFormula \) can be represented by a system of linear inequalities over binary variables.
For example,
$
F = (\SATTruthAssignment{}_1 \vee \neg \SATTruthAssignment{}_2) \wedge (\neg \SATTruthAssignment{}_1 \vee \neg \SATTruthAssignment{}_2 \vee \SATTruthAssignment{}_3)
$
corresponds to
\[
u_1 + (1 - u_2) \geq 1, \quad (1 - u_1) + (1 - u_2) + u_3 \geq 1, \quad u \in \BinarySet^3.
\]
A satisfying truth assignment corresponds exactly to a feasible solution, where $\SATTruthAssignment{}_i = \text{true}$ corresponds to $u_i = 1$.
In general, given any \( \BILPQFreeBooleanFormula \), one can construct
$
A_{\BILPQFreeBooleanFormula} u \geq \OneVector - b_{\BILPQFreeBooleanFormula}, \ u \in \BinarySet^{\BILPSATNVar},
$
where \( A_{\BILPQFreeBooleanFormula} \in \Integers^{\BILPSATNClause \times \BILPSATNVar} \) and \( b_{\BILPQFreeBooleanFormula} \in \Integers^{\BILPSATNClause} \) such that (assuming that each Boolean variable appears at most once in each clause) each entry of $A_{\BILPQFreeBooleanFormula}$ is between $-1$ and $1$, and each entry of $b_{\BILPQFreeBooleanFormula}$ is between $0$ and $3$.
In the following, we state that a binary vector $u$ satisfies \( \BILPQFreeBooleanFormula \) if and only if it satisfies $A_{\BILPQFreeBooleanFormula} u \ge \OneVector - b_{\BILPQFreeBooleanFormula}$.
% the corresponding truth assignment satisfies \( \BILPQFreeBooleanFormula \).

One can reformulate~\ref{Eq:3SAT} as an optimization problem
$$
\max_{u, v}
\{
v :
A_{\BILPQFreeBooleanFormula} u \ge v \OneVector - b_{\BILPQFreeBooleanFormula}, \
u \in \BinarySet^{\BILPSATNVar}, v \in \BinarySet
\},
$$
which is $1$ if $\BILPQFreeBooleanFormula$ is satisfiable, and $0$ otherwise.
Although some entries in $b_{\BILPQFreeBooleanFormula}$ are larger than $1$, one can reformulate the instance such that all data lies between $-1$ and $1$ by introducing auxiliary variables $w \in \BinarySet^3$:
\begin{equation}\label{SAT:entresin01}
\max_{u, v, w}
\left\{
v :
% \begin{array}{l}
A_{\BILPQFreeBooleanFormula} u \ge v \OneVector - B_{\BILPQFreeBooleanFormula} w,
% \\
\
% w_{1} = w_{2} = w_{3} = 1,
w = \OneVector,
% \\
\
u \in \BinarySet^{\BILPSATNVar}, v \in \BinarySet, w \in \BinarySet^3
% \end{array}
\right\},
\end{equation}
where each row of $B_{\BILPQFreeBooleanFormula}$ is the unary representation of the corresponding row of $b_{\BILPQFreeBooleanFormula}$.
% \Margarida{I would just say that $B_F$ corresponds to the unary writing of $b_F$. In my opinion, this would make the equivalence much more obvious because it gives the key idea; otherwise, the reader needs to figure out that this is just the unary encoding. I had to spend a few minutes to understand this myself.}
% We note that by replacing $v \in \BinarySet$ with $v \in \BinarySet^{\BILPSATNClause}$, one can compute the maximum number of simultaneously satisfiable clauses:
% $$
% \max_{u, v, w}
% \left\{
% \sum_{i = 1}^{\BILPSATNClause} v_{i} :
% \begin{array}{l}
% A_{\BILPQFreeBooleanFormula} u \ge v - B_{\BILPQFreeBooleanFormula} w,
% \\
% w = \OneVector,
% \\
% u \in \BinarySet^{\BILPSATNVar}, v \in \BinarySet^{\BILPSATNClause}, w \in \BinarySet^3
% \end{array}
% \right\}.
% $$

Next, we consider a quantified version of the satisfiability problem.
Let $\BILPSetOfBooleanFormulae$ denote the set of closed quantified Boolean formulae $\BILPBooleanFormula$ in prenex normal form of the form
\begin{equation}
\exists s \forall t [\BILPQFreeBooleanFormula(s, t) = 0],
\label{Eq:QSATH}
\end{equation}
where $s$ and $t$ are disjoint sets of Boolean variables and $\BILPQFreeBooleanFormula \in \BILPSetOfThreeCNF$.
% \Margarida{We already used $s$ for the number of clauses in a 3SAT instance. Maybe we can use $q$ for number of clauses?}
We assume $s$ and $t$ have the same number of Boolean variables (this can be achieved by padding extra variables), and denote it by $\BILPSATNVar$.
The following decision problem is \ClassSigmaP{2}-complete~\cite{wrathall1976complete}:
\begin{equation}
% \tag{Q3SAT}
\notag
\parbox{0.8\textwidth}{%
Given $\BILPBooleanFormula \in \BILPSetOfBooleanFormulae$, is $\BILPBooleanFormula$ true?
}
\end{equation}
A straightforward extension of the above construction gives a transformation of $\BILPBooleanFormula \in \BILPSetOfBooleanFormulae$ of form~\eqref{Eq:QSATH} to a \BILPBinarySet{} instance
\begin{equation}
\label{Eq:Jeroslow}
% \tag{\ensuremath{\text{\EqTagQSATBILP}(\BILPBooleanFormula)}}
\min_{s \in \BinarySet^{\BILPSATNVar}}
\max_{t, v}
\left\{
v :
% \begin{array}{l}
% C_{\BILPBooleanFormula} s
% +
% A_{\BILPBooleanFormula} t
A_{\BILPQFreeBooleanFormula}
\begin{pmatrix}
s \\ t
\end{pmatrix}
\ge
v \OneVector
- 
% b_{\BILPBooleanFormula},
b_{\BILPQFreeBooleanFormula},
% \\
\
t \in \BinarySet^{\BILPSATNVar}, v \in \BinarySet
% \end{array}
\right\},
\end{equation}
% \MargaridaSideComment{I think it should be $A_F \begin{pmatrix}
%     s\\
%     t
% \end{pmatrix}\geq v 1 -b_F$. Otherwise, I guess we need to define the notation above... }%
which is $0$ if $\BILPBooleanFormula$ is true and $1$ otherwise.
With auxiliary variables as in \eqref{SAT:entresin01}, this instance can be transformed to satisfy conditions~\ref{Condition1}--\ref{Condition3}.
Using a similar line of argument \citet{Jeroslow1985} showed that the decision problem \DecisionProblemBilevelBinaryLP{} is \ClassSigmaP{2}-hard even under conditions~\ref{Condition1}--\ref{Condition3}.

\subsection{Estimate of Optimal Objective Value}
\label{sec:Bilevel:EstimateOfOptimalObjectiveValue}

In this section, we establish an estimate of the encoding size of an optimal solution.

\begin{proposition}
\label{prop:EstimateOfOptimalObjectiveValue}
There exists a polynomial $\psi$ such that the following holds:
\begin{enumerate}
\item
For any $\DomainPlaceholder \in \{\Integers, \NonNegativeIntegers\}$ and \BILPWithDomainPlaceholder{} instance, if it is feasible and not unbounded, there exists an optimal solution with an encoding size of at most $\psi(\sigma)$, where $\sigma$ is the encoding size of the instance;
\item
For any $\DomainPlaceholder \in \{\Integers, \NonNegativeIntegers\}$ and \BILPWithDomainPlaceholder{} instance, if it is feasible and not unbounded, its optimal objective value has an encoding size of at most $\psi(\sigma)$, where $\sigma$ is the encoding size of the instance.
\end{enumerate}
\end{proposition}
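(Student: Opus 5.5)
The plan is to reduce the bilevel instance to a statement about a single-level integer program over a finitely generated structure, using the theory of parametric integer programming and proximity/encoding bounds for integer hulls. First, by the standard substitution $x = x^+ - x^-$ it suffices to treat $\DomainPlaceholder = \NonNegativeIntegers$; the encoding size grows only polynomially. Throughout, I write $P(x_1) = \{x_2 : A_{22} x_2 \ge b_2 - A_{21} x_1,\ x_2 \ge \ZeroVector\}$ for the lower-level polyhedron and $\phi(x_1) = \min\{c_{22}^\top x_2 : x_2 \in P(x_1) \cap \Integers^{n_2}\}$ for the lower-level value function.

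\textbf{Step 1: structure of the value function.} I would invoke the known result (Blair--Jeroslow, or the Cook--Gerards--Schrijver--Tardos proximity theorem) that the integer program behind $\phi$ has a bounded structure. Whenever $\phi(x_1)$ is finite, the integer hull of $P(x_1)$ has an optimal vertex within distance $n_2 \Delta$ of an optimal vertex of the LP relaxation, where $\Delta$ bounds the subdeterminants of $A_{22}$. Consequently $\phi(x_1) = \min_{z \in Z} \{c_{22}^\top (B(x_1) + z)\}$, where $B(x_1)$ is an LP basic solution, given by a rational-linear function on each basis cone, and $Z$ is a finite set of correction vectors whose entries have polynomial size. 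The cases where $\phi$ is $-\infty$ can be excluded: they are detected by the recession cone $\{y \ge 0 : A_{22} y \ge 0,\ c_{22}^\top y < 0\}$, which does not depend on $x_1$, so for a feasible instance this cone is empty of such rays.

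\textbf{Step 2: rewrite the bilevel problem as a union of integer programs.} Feasibility of $(x_1, x_2)$ in~\eqref{Eq:BILP} means three things: $x_2 \in P(x_1) \cap \Integers^{n_2}$, the upper-level constraints hold, and $c_{22}^\top x_2 \le c_{22}^\top y$ for every integer $y \in P(x_1)$. By Step 1, the last condition is equivalent to $c_{22}^\top x_2 \le c_{22}^\top y$ for every $y$ in a polynomially describable finite family. Each member of this family is determined by a choice of basis $\beta$ and a correction $z \in Z$, and is of the form $y = y_{\beta,z}(x_1)$. The complication is that $y_{\beta,z}(x_1)$ must itself be integral and feasible, which requires adding rounding and congruence conditions on $x_1$. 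These can be modelled with auxiliary integer variables $k$ through constraints of the form $D_\beta x_1 = \det(\cdot)\, k + r$. Taking the disjunction over $(\beta, z, r)$ expresses the bilevel feasible region as a finite union of projections of polyhedra intersected with lattices. Each piece is described by data of encoding size polynomial in $\sigma$, although there may be exponentially many pieces.

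\textbf{Step 3: conclude.} Minimizing $c_{11}^\top x_1 + c_{12}^\top x_2$ over each piece is an ordinary integer program with polynomial-size data. By the classical result (Papadimitriou; Schrijver, Cor.~17.1b), if such a program is bounded and feasible, it has an optimal solution whose size is polynomial in the size of its data. The overall optimum is the minimum over the pieces and is attained at one of these solutions. This gives part (i), and part (ii) follows immediately, since the objective value of a polynomial-size solution has polynomial size. Some care is needed with the claim that ``not unbounded'' is inherited: if the bilevel instance is bounded, each piece is bounded as well, because every piece is a subset of the bilevel feasible set.

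\textbf{Main obstacle.} The hard step is Step 2: making precise the finite, polynomially bounded certificate for lower-level optimality that is uniform in $x_1$. The parametric dependence of the integer hull on the right-hand side $b_2 - A_{21} x_1$ is only periodic-piecewise-linear; this is the Blair--Jeroslow theory of Gomory/Chvátal functions. One has to check that the periods and all constants involved have polynomial encoding size, and not merely that they are finite. If the proximity-based description becomes too messy, I would instead try to use Kannan's/Eisenbrand--Shmonin parametric description, that is, polynomial-size bounds on the pieces of the ``$\forall\exists$'' structure. That description gives exactly such a decomposition with polynomially bounded coefficients, although the number of pieces may be exponential.
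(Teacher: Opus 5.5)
\textbf{Gap in Step~2.} Your overall strategy is the same as the paper's: write the bilevel feasible set as a finite union of sets of integer points of polyhedra in polynomially many variables, with each inequality of polynomial encoding size, and then apply Schrijver's bound uniformly over the pieces. However, Step~2, which you yourself flag, is not correct as described.

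Lower-level optimality of $x_2$ is a \emph{universal} statement whose range depends on $x_1$: for every comparator $y$, either $y$ is not a feasible integer point for this $x_1$, or $c_{22}^{\top} x_2 \le c_{22}^{\top} y$. Your description fails in two ways:
\begin{itemize}
\item Adding conditions on $x_1$ that make the comparators $y_{\beta,z}(x_1)$ feasible cuts off bilevel-feasible points at which some comparator is infeasible. This is the typical situation for points in the proximity box.
\item Taking a disjunction over $z$ compares $x_2$ against a single comparator, so it admits non-optimal $x_2$.
\end{itemize}
Neither yields the bilevel feasible set.

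The missing idea is to case-split the implication itself. Parametrize comparators as $k + w$ rather than $B(x_1) + z$, since the latter is generally fractional. Here $k = \lfloor B_{\beta}(x_1) \rfloor$ is encoded by integer variables via $d\,B_{\beta}(x_1) = d\,k + r$, $\ZeroVector \le r \le (d - 1)\OneVector$, $d = |\det A_{\beta}|$. Then index the pieces by two things:
\begin{itemize}
\item a basis $\beta$, together with the linear conditions in $x_1$ that make $\beta$ LP-optimal, so that the proximity theorem applies;
\item a choice function assigning to each integer $w$ with $\|w\|_{\infty} \le n_2 \Delta$ either the inequality $c_{22}^{\top} x_2 \le c_{22}^{\top}(k + w)$, or one lower-level constraint that $k + w$ violates by at least $1$ (using integrality of the data).
\end{itemize}
This union is doubly exponential but finite, and it projects exactly onto the feasible set.

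\textbf{Consequence for Step~3.} The repair also shows that your claim of polynomial-size data per piece is false. The proximity box contains $(2 n_2 \Delta + 1)^{n_2}$ points, and $\Delta$ can be $2^{\mathrm{poly}(\sigma)}$, so every piece has exponentially many inequalities. The argument survives only because Schrijver's Cor.~17.1b bounds an optimal solution in terms of the number of variables and the maximum encoding size of a single inequality. Both quantities are uniform over the pieces, which is exactly the point made in the paper's proof.

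\textbf{The paper's route.} The paper reaches the same kind of decomposition much more directly. After passing to equality form with nonnegative variables, it uses a test set $T = \{t^1, \ldots, t^I\}$ for the lower-level problem (Lemma~\ref{lemma:TestSetOfILP}). Since $T$ depends only on $(\bar{A}_{22}, \bar{c}_{22})$ and $\bar{A}_{22} t^l = \ZeroVector$, the point $\bar{x}_2 + t^l$ satisfies the lower-level equations for every $\bar{x}_1$. Lower-level optimality therefore reduces to $\bar{x}_2 + t^l \not\ge \ZeroVector$ for all $l$, that is, to choosing a coordinate $q_l$ with $(\bar{x}_2 + t^l)_{q_l} \le -1$. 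This produces the pieces $P_{q_1, \ldots, q_I}$ without any LP bases, rounding, congruences, or proximity. Once repaired, your route is a valid alternative. The test set, however, is precisely the uniform-in-$x_1$ certificate that you identified as the main obstacle.
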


Note that Proposition~\ref{prop:EstimateOfOptimalObjectiveValue} implies an analogous result for a feasible solution.
We use the following lemma for the proof of Proposition~\ref{prop:EstimateOfOptimalObjectiveValue}.

\begin{lemma}[E.g., \citet{weismantel1998test}]
\label{lemma:TestSetOfILP}
Let $n, m \in \PositiveIntegers$, $A \in \Integers^{m \times n}$ and $c \in \Integers^{n}$.
Then, there exists a finite set $T \subset \Integers^{n}$ such that the following holds:
\begin{enumerate}
\item
Each $t \in T$ has an encoding size that is polynomial in the encoding sizes of $A$ and $c$ 
% \MargaridaSideComment{should we add ''in $A,b$'' (although I agree that only a sleepy reader would have doubts about it)}%
and satisfies $A t = 0$, and $c^{\top} t < 0$;
\item
For any $b \in \Integers^{m}$ and $x \in \NonNegativeIntegers^{n}$, $x$ is optimal for $\min_{x} \{ c^{\top} x : A x = b, x \in \NonNegativeIntegers^{n} \}$ if and only if $x$ is feasible and $x + t \not\ge \ZeroVector$ for all $t \in T$.
\end{enumerate}
\end{lemma}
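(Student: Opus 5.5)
The plan is to take $T$ to be the part of the \emph{Graver basis} of $A$ that strictly decreases the objective. Call $g, h \in \Integers^n$ \emph{conformal}, written $g \sqsubseteq h$, if $g_i h_i \ge 0$ and $|g_i| \le |h_i|$ for every $i$. The Graver basis $\mathcal{G}(A)$ is the set of $\sqsubseteq$-minimal elements of $\{g \in \Integers^n \setminus \{\ZeroVector\} : A g = \ZeroVector\}$. It is finite by Gordan--Dickson's lemma, applied orthant by orthant. I would set
\[
T = \{ g \in \mathcal{G}(A) : c^{\top} g < 0 \}.
\]
By construction every $t \in T$ satisfies $A t = \ZeroVector$ and $c^{\top} t < 0$. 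Two things remain: the size bound in (i), and the optimality characterization in (ii).

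For (ii), the key tool is the standard conformal decomposition property. Every nonzero $z \in \ker A \cap \Integers^n$ can be written as $z = \sum_j g_j$ with $g_j \in \mathcal{G}(A)$ and $g_j \sqsubseteq z$ for all $j$. To prove it, pick any $\sqsubseteq$-minimal $g \sqsubseteq z$ in the lattice kernel. Then $z - g$ is again in the kernel, is conformal to $z$, and has strictly smaller $\ell_1$-norm, so induction finishes.

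With this in hand, fix a feasible $x \in \NonNegativeIntegers^n$ with $A x = b$.
\begin{itemize}
\item If $x + t \ge \ZeroVector$ for some $t \in T$, then $x + t$ is feasible with strictly smaller cost, so $x$ is not optimal.
\item Conversely, suppose $x$ is not optimal. This covers the unbounded case, since we only need some feasible $y$ with $c^{\top} y < c^{\top} x$. Decompose $z = y - x$ conformally as $\sum_j g_j$. Then $c^{\top} g_j < 0$ for some $j$, so $g_j \in T$.
\item Because $g_j \sqsubseteq z$, each coordinate $(x + g_j)_i$ lies between $x_i \ge 0$ and $x_i + z_i = y_i \ge 0$. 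Hence $x + g_j \ge \ZeroVector$, which is exactly what (ii) forbids for optimal $x$.
\end{itemize}

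The main obstacle is (i), the polynomial encoding bound on Graver elements. I would use the circuit representation. Every Graver element $g$ can be written as $g = \sum_k \lambda_k h_k$, where:
\begin{itemize}
\item the $h_k$ are circuits of $A$, i.e.\ support-minimal kernel vectors scaled to be primitive integral;
\item each $h_k$ is conformal to $g$;
\item there are at most $n$ of them, by Carathéodory applied in the orthant cone of $g$;
\item $0 \le \lambda_k < 1$.
\end{itemize}
The strict inequality $\lambda_k < 1$ holds because if some $\lambda_k \ge 1$, then $h_k \sqsubseteq g$ would be a proper conformal lattice vector below $g$, contradicting minimality unless $g = h_k$.

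By Cramer's rule, each circuit has entries bounded in absolute value by the maximal subdeterminant $\Delta$ of $A$, and $\log \Delta$ is polynomial in the encoding size of $A$ by Hadamard's bound. Therefore $\|g\|_\infty \le n \Delta$, which gives an encoding size polynomial in that of $A$. The bound is independent of $c$, so it is a fortiori polynomial in the sizes of $A$ and $c$.
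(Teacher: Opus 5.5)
Your proof is correct and complete. The paper gives no proof of its own here, only a citation to Weismantel's survey on test sets. Your argument is the standard one behind that citation: the Graver basis filtered by $c^{\top} g < 0$; the conformal decomposition of $y - x$ for the optimality characterization in (ii); and the conformal circuit decomposition with coefficients $\lambda_k < 1$, combined with Cramer's rule and Hadamard's bound, for the size estimate in (i).
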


\begin{proof}[Proof of Proposition~\ref{prop:EstimateOfOptimalObjectiveValue}]
% Let $(A, b, c)$ be a bilevel integer LP instance~\eqref{Eq:BILP}.
By adding slack variables and splitting free variables if necessary, rewrite~\eqref{Eq:BILP} as
\begin{align}
\min_{\bar{x}_1, \bar{x}_2}
\left\{
\bar{c}_{1 1}^{\top} \bar{x}_1 + \bar{c}_{1 2}^{\top} \bar{x}_2
:
\begin{array}{l}
\bar{A}_{1 1} \bar{x}_1 + \bar{A}_{1 2} \bar{x}_2 = \bar{b}_1,
\bar{x}_1 \in \NonNegativeIntegers^{\bar{n}_1},
\\
\displaystyle
\bar{x}_2
\in
\Argmin_{\bar{x}_2'}
\left\{
\bar{c}_{2 2}^{\top} \bar{x}_2'
:
\begin{array}{l}
\bar{A}_{2 1} \bar{x}_1 + \bar{A}_{2 2} \bar{x}_2'
=
\bar{b}_2,
\\
\bar{x}_2' \in \NonNegativeIntegers^{\bar{n}_2}
\end{array}
\right\}
\end{array}
\right\}.
% \tag{\ensuremath{\overline{\text{\EqTagDefBILP}}}}
\label{Eq:BILP-Equality-Form}
\end{align}
Given a feasible solution for the original instance~\eqref{Eq:BILP}, one can construct a feasible solution for~\eqref{Eq:BILP-Equality-Form} efficiently, and vice versa.

\newcommand{\Pq}{P_{q_1, \ldots, q_I}}
Let $T := \{ t^{l} : l = 1, \ldots, I \}$ be the test set for the lower-level problem.
For any $\bar{x}_1$, $\bar{x}_2$ is optimal for the lower-level problem if and only if it is feasible and $\bar{x}_2 + t^{l}$ is not feasible for all $l = 1, \ldots, I$.
Thus, the feasible set of \eqref{Eq:BILP-Equality-Form} can be rewritten as
\begin{align*}
\FeasSolSet\eqref{Eq:BILP-Equality-Form}
&=
\left\{
\begin{pmatrix}
\bar{x}_1 \\ \bar{x}_2
\end{pmatrix}
\in
\NonNegativeIntegers^{\bar{n}_1 + \bar{n}_2}
:
\begin{array}{l}
\bar{A}_{1 1} \bar{x}_1 + \bar{A}_{1 2} \bar{x}_2 = \bar{b}_1,
\\
\bar{A}_{2 1} \bar{x}_1 + \bar{A}_{2 2} \bar{x}_2 = \bar{b}_2,
\\
\bar{x}_2 + t^{l} \not\ge 0,
\qquad
\forall l = 1, \ldots, I
\end{array}
\right\}.
\end{align*}
Thus, we have
$
\FeasSolSet\eqref{Eq:BILP-Equality-Form}
=
\cup_{(q_1, \ldots, q_I) \in J} \Pq,
$
where $J := \{ 1, \ldots, \bar{\NVariables}_2 \}^{I}$ and
$$
\Pq
:=
\left\{
\begin{pmatrix}
\bar{x}_1 \\ \bar{x}_2
\end{pmatrix}
\in
\NonNegativeIntegers^{\bar{n}_1 + \bar{n}_2}
:
\begin{array}{l}
\bar{A}_{1 1} \bar{x}_1 + \bar{A}_{1 2} \bar{x}_2 = \bar{b}_1,
\\
\bar{A}_{2 1} \bar{x}_1 + \bar{A}_{2 2} \bar{x}_2 = \bar{b}_2,
\\
(\bar{x}_2 + t^{l})_{q_l} \le -1,
\qquad
\forall l = 1, \ldots, I
\end{array}
\right\}
$$
for each $(q_1, \ldots, q_I) \in J$.
Therefore,
$$
\OptValFunc\eqref{Eq:BILP-Equality-Form}
= \min_{(q_1, \ldots, q_I) \in J} \min_{\bar{x}_1, \bar{x}_2} \{
\bar{c}_{1 1}^{\top} \bar{x}_1 + \bar{c}_{1 2}^{\top} \bar{x}_2
:
(\bar{x}_1, \bar{x}_2) \in \Pq
\}
$$
and if $\OptSolSet\eqref{Eq:BILP-Equality-Form} \ne \emptyset$, there exists $(q_1, \ldots, q_I) \in J$ such that
\begin{equation}
\label{Eq:SolSetUnionForm}
\Argmin_{\bar{x}_1, \bar{x}_2} \{
\bar{c}_{1 1}^{\top} \bar{x}_1 + \bar{c}_{1 2}^{\top} \bar{x}_2
:
(\bar{x}_1, \bar{x}_2) \in \Pq
\}
\subseteq \OptSolSet\eqref{Eq:BILP-Equality-Form}.
\end{equation}
For each $(q_1, \ldots, q_I) \in J$, $\Pq$ is defined by exponentially many constraints.
However, its dimension is polynomial, and the coefficients of each constraint have polynomial encoding size. 
This guarantees that, whenever the optimization problem on the left-hand side of~\eqref{Eq:SolSetUnionForm} admits an optimal solution, it admits one of polynomial encoding size~\cite{Schrijver1998}.
As this bound depends only on the dimension and coefficient magnitudes, which is common to every $\Pq$, it holds uniformly over all $\bar{\NVariables}_2^I$ polyhedra, so their minimum $\OptValFunc\eqref{Eq:BILP-Equality-Form}$ inherits it.
Now, the claim follows.
\end{proof}

\subsection{Proof of Theorem~\ref{theorem:ComplexityOfBilevelIntegerLP}}
\label{sec:Bilevel:ProofOfMainTheorem}

We give the proof of Theorem~\ref{theorem:ComplexityOfBilevelIntegerLP} in this section.
We begin with the study of the decision of feasibility.
For $\DomainPlaceholder \in \{\Integers, \NonNegativeIntegers\}$, consider
\begin{align}
&
\parbox{0.8\textwidth}{Given $(A, b, c)$, does there exist a feasible solution for~\eqref{Eq:BILP}?}
\tag{\DecisionProblemFeasibilityWithDomainPlaceholder{}}
\label{Eq:BILP-FEAS}
\end{align}

The following lemma gives the complexity of this problem.
\begin{lemma}
\label{lemma:ComplexityOfFeasibilityOfBilevelProgramming}
The decision problems \DecisionProblemFeasibilityOfBilevelIntegerLP{} and \DecisionProblemFeasibilityOfBilevelNonNegativeIntegerLP{} are in \ClassSigmaP{2}.
\end{lemma}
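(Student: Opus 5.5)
The plan is to exhibit a $\exists\forall$ characterization with polynomially bounded certificates, using Proposition~\ref{prop:EstimateOfOptimalObjectiveValue} to control sizes. The instance is feasible if and only if there exist $x_1 \in \DomainPlaceholder^{n_1}$ and $x_2 \in \DomainPlaceholder^{n_2}$ satisfying three conditions: the upper-level constraints $A_{11}x_1 + A_{12}x_2 \ge b_1$, the lower-level constraints $A_{21}x_1 + A_{22}x_2 \ge b_2$, and lower-level optimality, which says that for every $x_2' \in \DomainPlaceholder^{n_2}$ with $A_{21}x_1 + A_{22}x_2' \ge b_2$ we have $c_{22}^\top x_2 \le c_{22}^\top x_2'$. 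The first two conditions are polynomial-time checkable and the third is a universal statement. So the task is to bound the encoding sizes of both the existential witness $(x_1,x_2)$ and the universal witness $x_2'$ by a polynomial in $\sigma$.

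\textbf{Existential witness.} Apply Proposition~\ref{prop:EstimateOfOptimalObjectiveValue} to an auxiliary instance with the same constraints and zero upper-level objective, i.e.\ $c_{11}=\ZeroVector$, $c_{12}=\ZeroVector$. This instance is feasible exactly when the original one is. It is never unbounded, since its objective is identically $0$. Its encoding size is at most $\sigma$. Hence, if the original instance is feasible, it has a feasible solution of size at most $\psi(\sigma)$. This matches the remark after the proposition that the bound carries over to feasible solutions.

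\textbf{Universal witness.} Fix a candidate $(x_1,x_2)$ of size at most $\psi(\sigma)$. Lower-level optimality of $x_2$ fails if and only if some feasible $x_2'$ has $c_{22}^\top x_2' \le c_{22}^\top x_2 - 1$. This is a single-level integer feasibility problem in $x_2'$, with right-hand sides $b_2 - A_{21}x_1$ and $c_{22}^\top x_2 - 1$, so its encoding size is polynomial in $\sigma$. Two cases cover everything.
\begin{itemize}
\item If this system is feasible, the classical bound on integer solutions of linear systems (Schrijver~\cite{Schrijver1998}; equivalently Proposition~\ref{prop:EstimateOfOptimalObjectiveValue} with $n_1=0$ and zero objective) gives a violating $x_2'$ of polynomial size.
\item This also covers an unbounded lower level: there a violating $x_2'$ always exists, and again one of small size exists.
\end{itemize}
Thus it suffices to quantify universally over $x_2'$ of size at most some polynomial $\phi(\sigma)$.

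\textbf{The $\Sigma^p_2$ predicate.} Feasibility is equivalent to: there exists $(x_1,x_2)$ of size at most $\psi(\sigma)$ such that, for all $x_2'$ of size at most $\phi(\sigma)$, the upper- and lower-level constraints hold for $(x_1,x_2)$, and either $x_2'$ is lower-level infeasible or $c_{22}^\top x_2 \le c_{22}^\top x_2'$. The inner matrix is checkable in polynomial time. This places \DecisionProblemFeasibilityWithDomainPlaceholder{} in \ClassSigmaP{2} for both domains, since nonnegativity is a linear constraint.

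\textbf{Main obstacle.} The main subtlety is making sure the size bounds are uniform. The bound $\phi$ must depend only on $\sigma$ and $\psi(\sigma)$, not on the particular candidate. This holds because the inner system's encoding size is polynomial in $\sigma + \psi(\sigma)$, and the integer-programming solution-size bound is polynomial in the system's size.
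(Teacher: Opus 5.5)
Your proof is correct and is essentially the paper's argument: the paper guesses a polynomial-size bilevel-feasible $(x_1,x_2)$ using Proposition~\ref{prop:EstimateOfOptimalObjectiveValue} and checks lower-level optimality with an \ClassNP{} oracle, which is your $\exists\forall$ predicate phrased as an oracle machine. Your version is more explicit on two points the paper leaves implicit: the zero-objective auxiliary instance, which lets the proposition (stated for feasible, bounded instances) bound a merely feasible solution, and the small-solution bound for integer feasibility, which justifies restricting the universal witness $x_2'$ (equivalently, shows the oracle query is in \ClassNP{}).
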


\begin{proof}
For $\DomainPlaceholder \in \{\Integers, \NonNegativeIntegers\}$, the following \ClassNP{} machine $M$ with an \ClassNP{} oracle decides \DecisionProblemFeasibilityWithDomainPlaceholder{}:
\begin{enumerate}
\item
Nondeterministically guess a feasible solution $(x_1, x_2)$ of polynomial size.
\item \label{check1}
Verify that it satisfies
$
A_{1 1} x_1 + A_{1 2} x_2 \BILPConstraintSense b_1
$
and
$
A_{2 1} x_1 + A_{2 2} x_2 \BILPConstraintSense b_2.
$
\item \label{check2}
Verify that $x_2$ is optimal for the lower-level problem by querying the oracle.
\item
Accept if checks~\ref{check1} and~\ref{check2} pass.
\end{enumerate}
Proposition~\ref{prop:EstimateOfOptimalObjectiveValue} establishes a polynomial bound on the encoding size of a feasible solution in step~(i).
The input instance is a ``yes'' instance of \DecisionProblemFeasibilityWithDomainPlaceholder{} if and only if there exist nondeterministic choices under which $M$ accepts.
\end{proof}

\begin{proof}[Theorem~\ref{theorem:ComplexityOfBilevelIntegerLP}]
Hardness follows from Theorem~\ref{theorem:ComplexityOfBilevelBinaryLP}.
In light of Lemma~\ref{lemma:ComplexityOfFeasibilityOfBilevelProgramming}, membership follows from reducing \DecisionProblemBilevelLPWithDomainPlaceholder{} to \DecisionProblemFeasibilityWithDomainPlaceholder{}, $\DomainPlaceholder \in \{\Integers, \NonNegativeIntegers\}$: One can decide the former by checking the feasibility of the input instance with an additional upper-level constraint ``the objective value is less than or equal to the threshold''.
\end{proof}

\section{Computational Complexity Under Structural Restrictions}
\label{sec:FixedDimensions}

In this section, we consider the effects of structural assumptions.

\subsection{Fixed Total Number of Variables}

First, we consider a tractable case.

\begin{proposition}
\label{Prop:FixedTotalNumberOfVariables}
Let $k \in \NonNegativeIntegers$ and $\DomainPlaceholder \in \{ \Integers,\NonNegativeIntegers,\BinarySet \}$.
For any \BILPWithDomainPlaceholder{} instance satisfying $\NVariables_{1}+\NVariables_{2}=k$, an optimal solution to \eqref{Eq:BILP} can be computed, or infeasibility or unboundedness can be detected, in polynomial time.
\end{proposition}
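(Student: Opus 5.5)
The plan is to reduce to the bounded case handled by \citet{koppe2010parametric}, and to treat $\DomainPlaceholder = \BinarySet$ separately. For $\BinarySet$ there is nothing new to prove: with $n_1 + n_2 = k$ there are only $2^k$ candidate points $(x_1, x_2)$. For each one we check the constraints, and we check lower-level optimality by comparing $c_{22}^\top x_2$ against all $2^{n_2}$ alternatives $x_2'$. This takes polynomial time.

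For $\DomainPlaceholder \in \{\Integers, \NonNegativeIntegers\}$, the first step is to decide whether the lower level is unbounded for every $x_1$. Since the lower-level feasible set is $\{x_2 : A_{22} x_2 \ge b_2 - A_{21}x_1\}$, its recession cone $\{d : A_{22} d \ge \ZeroVector\}$ (with $d \ge \ZeroVector$ in the $\NonNegativeIntegers$ case) does not depend on $x_1$. If this cone contains a rational $d$ with $c_{22}^\top d < 0$, then the lower level is unbounded or infeasible for every $x_1$. Indeed, an integer feasible $x_2$ plus a scaled integral $d$ stays feasible. Hence the bilevel instance is infeasible, which we report. This test is a single LP. Otherwise, whenever the lower level is feasible it has an optimum. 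Using Lenstra's algorithm, or Eisenbrand–Shmonin parametric integer programming in fixed dimension, we also decide whether the set of $x_1$ with a feasible lower level and $A_{11}x_1 + A_{12}x_2 \ge b_1$ is nonempty.

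The key step is a truncation lemma. Let $R = 2^{\rho(\sigma)}$ for a suitable polynomial $\rho$, and consider the instance in which the box $\|x_1\|_\infty \le R$ is added as non-linking upper-level constraints and the box $\|x_2'\|_\infty \le R'$ is added to the lower level, with $R' = 2^{\rho'(\sigma)}$ chosen larger than $R$. The claim is that for every integral $x_1$ with $\|x_1\|_\infty \le R$ and a feasible lower level, the boxed lower level has the same optimal value as the unboxed one. This follows from the standard bound on optimal solutions of an integer program in terms of the encoding of its right-hand side, here $b_2 - A_{21}x_1$ of encoding size $O(\sigma + \rho(\sigma))$, as in \cite{Schrijver1998}. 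Consequently, the boxed and unboxed bilevel instances have the same feasible points inside the box. Moreover, by Proposition~\ref{prop:EstimateOfOptimalObjectiveValue}(i), if the original instance is feasible and bounded, it has an optimal solution inside the box once $\rho \ge \psi$. The boxed instance satisfies the boundedness hypotheses of \citet{koppe2010parametric}, and its encoding size grows only polynomially because $R$ and $R'$ are written in binary. So the cited algorithm solves it in polynomial time.

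Unboundedness detection is the step I expect to be the main obstacle. The plan is as follows. By Proposition~\ref{prop:EstimateOfOptimalObjectiveValue}(ii), a feasible bounded instance has optimal value at least $-2^{\psi(\sigma)}$. We therefore solve the boxed instance with a larger radius $R_2$ and check whether it has a feasible point of objective value below $-2^{\psi(\sigma)}$. If it does, the truncation lemma says this point is genuinely feasible for the original instance, which is then unbounded. If it does not, and the original instance is feasible, then it is bounded, and the optimum from the first run is returned. The delicate points are to choose the radii so that the truncation lemma holds at both scales, and to verify that a point witnessing unboundedness really appears within a polynomially bounded box. For the latter I would argue that, if the instance is unbounded, then the constrained version with objective value $\le -2^{\psi(\sigma)} - 1$ added as an upper-level constraint is feasible. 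Applying Proposition~\ref{prop:EstimateOfOptimalObjectiveValue} to that instance then yields a witness of polynomial encoding size.
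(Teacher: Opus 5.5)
Your proof is correct, but it takes a different route from the paper's. The paper does not reduce to the bounded case. It observes that the algorithm of K\"oppe et al.\ uses boundedness to get polynomial encoding bounds on the optimal value and an optimal solution, which drive its binary search. Proposition~\ref{prop:EstimateOfOptimalObjectiveValue} supplies these bounds directly, so the paper concludes that the algorithm extends ``with straightforward modifications.''

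You instead treat the cited algorithm as a black box. Proposition~\ref{prop:EstimateOfOptimalObjectiveValue} places an optimal solution inside a box of polynomial encoding size. The recession-cone LP rules out lower-level unboundedness, which would otherwise make boxed lower levels spuriously solvable. The single-level integer-programming size bound lets you box $x_2$ inside the lower level without changing lower-level optimality. A second run with a larger box decides unboundedness.

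Your route needs these extra ingredients. In exchange, it does not depend on where boundedness enters the cited algorithm (for instance, inside its parametric integer programming subroutine). It also makes explicit the detection of infeasibility and unboundedness, which the paper's sketch leaves implicit. The paper's argument is shorter but less self-contained.

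Three small fixes:
\begin{itemize}
\item In the unboundedness step, the instance with the added constraint on the objective is still unbounded. You must therefore apply Proposition~\ref{prop:EstimateOfOptimalObjectiveValue}(i) to its zero-objective version, i.e., the feasible-solution analogue the paper mentions.
\item The Lenstra/Eisenbrand--Shmonin check you describe only tests the high-point relaxation, not bilevel feasibility. It is unnecessary anyway, since the boxed runs already detect infeasibility.
\item If K\"oppe et al.'s hypothesis asks for boundedness of the lower-level polyhedron in $(x_1,x_2)$-space, also place the $x_1$-box in the lower level. This is harmless because non-linking constraints can be moved there.
\end{itemize}
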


\citet{koppe2010parametric} proved Proposition~\ref{Prop:FixedTotalNumberOfVariables} under the additional assumption that the feasible set is bounded. Their algorithm computes the optimal objective value and an optimal solution by binary search, relying on polynomial bounds on their encoding sizes.
Proposition~\ref{prop:EstimateOfOptimalObjectiveValue} provides a polynomial encoding size bound even without the boundedness of the feasible set. 
Hence, their algorithm extends to the general case with straightforward modifications.

\subsection{Fixed Number of Lower-Level Variables or Constraints}

Next, we consider cases where the number of lower-level variables or constraints is fixed.
% TODO bimodular matrix.
% We show that in these cases the problem goes down one level of the polynomial hierarchy and becomes \ClassNP{}-complete.
% We also establish the results where the lower-level problem has a bimodular constraint matrix, which makes the lower-level problem polynomially solvable given the upper-level decision.

\begin{proposition}
\label{Prop:FixedLowerLevel}
For any $k \in \NonNegativeIntegers$, the following statements hold.
\begin{enumerate}
\item
The decision problems \DecisionProblemBilevelIntegerLP{}, \DecisionProblemBilevelNonNegativeIntegerLP{}, and \DecisionProblemBilevelBinaryLP{} with the input restricted to satisfy $\NVariables_{2} = k$ are \ClassNP{}-complete.
Moreover, this remains valid even under the additional assumptions of conditions~\ref{Condition1}--\ref{Condition3}.
\item
The decision problem \DecisionProblemBilevelIntegerLP{} with the input restricted to satisfy $\NConstraints_{2} = k$ is \ClassNP{}-complete.
Moreover, this remains valid even under the additional assumptions of conditions~\ref{Condition1}--\ref{Condition3}.
\item
% \NagisaSideComment{\\It is not known if \DecisionProblemBilevelNonNegativeIntegerLP{} with $\NConstraints_{1} = 0, \NConstraints_{2} = 1$ is hard or not. But we do know \DecisionProblemBilevelBinaryLP{} with $\NConstraints_{1} = 0, \NConstraints_{2} = 1$ is \ClassSigmaP{2}-hard by our KNAPSACK argument below.}
The decision problems \DecisionProblemBilevelNonNegativeIntegerLP{} and \DecisionProblemBilevelBinaryLP{} with the input restricted to satisfy $\NConstraints_{2} = 1$ are \ClassSigmaP{2}-complete.
Moreover, this remains valid even under the additional assumptions of conditions~\ref{Condition1} and~\ref{Condition3}.
\item
The decision problems \DecisionProblemBilevelNonNegativeIntegerLP{} and \DecisionProblemBilevelBinaryLP{} with the input restricted to satisfy $\NConstraints_{2} = k$ and condition~\ref{Condition2} are \ClassNP{}-complete.
Moreover, this remains valid even under the additional assumptions of conditions~\ref{Condition1} and~\ref{Condition3}.
\end{enumerate}
\end{proposition}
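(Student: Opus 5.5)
Each part of Proposition~\ref{Prop:FixedLowerLevel} needs a membership argument and a hardness argument; I would handle the four parts in turn.

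\textbf{Parts (i) and (ii): membership.} Guess $(x_1,x_2)$ of polynomial size, which Proposition~\ref{prop:EstimateOfOptimalObjectiveValue} allows. Check the upper-level constraints and the threshold directly. Then check lower-level optimality of $x_2$ in polynomial time. With $\NVariables_2=k$ fixed and $x_1$ given, the lower level is an integer program in fixed dimension. Lenstra's algorithm, or the binary case by enumerating $2^k$ points, decides in polynomial time whether some feasible $x_2'$ has $c_{22}^\top x_2' < c_{22}^\top x_2$.

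For part (ii), with $\NConstraints_2=k$ and domain $\Integers$, I would use the Papadimitriou-type (Eisenbrand--Weismantel) pseudopolynomial algorithm for IPs with few constraints. Under \ref{Condition2} its running time is polynomial. The general case does not fall under \ref{Condition2}, so there I would instead argue via proximity/Hermite normal form. An integer program over $\Integers^{n}$ with $k$ inequality rows can be reduced by unimodular column operations to $k$ effective variables plus free directions. It is therefore solvable in polynomial time via Lenstra in dimension $k$, after checking unboundedness of the objective along the lattice kernel.

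\textbf{Parts (i) and (ii): hardness.} Take $\NVariables_2=\NConstraints_2=0$, or a trivial lower level padded with dummy variables and constraints to reach exactly $k$. The problem then contains single-level ILP feasibility with $\{-1,0,1\}$ data, for instance the 3SAT formulation~\eqref{SAT:entresin01} placed in the upper level. That formulation uses upper-level constraints without linking, and to satisfy \ref{Condition1} I would move them to the lower level as the paper allows. Doing so, however, raises $\NConstraints_2$. The fix is to instead encode the constraints through the objective of the min--max structure (\ref{Condition3}) while keeping the data in $\{-1,0,1\}$. Alternatively, use $\NonNegativeIntegers$/$\Integers$ upper variables with an infeasible-lower-level trick, so that the 3SAT constraints are lower-level constraints containing no lower-level variables. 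This gives $\NVariables_2$ fixed but $\NConstraints_2$ unbounded, which suffices for (i). For (ii) I need fixed $\NConstraints_2$, so the upper level must stay constrained via non-linking constraints. That is permitted under \ref{Condition1}, since \ref{Condition1} forbids only linking constraints.

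\textbf{Part (iii).} Membership follows from Theorem~\ref{theorem:ComplexityOfBilevelIntegerLP} and Theorem~\ref{theorem:ComplexityOfBilevelBinaryLP}. For hardness, reduce $\exists\forall$3SAT to a bilevel knapsack with a single lower-level constraint, in the style of Caprara et al.: the upper level chooses $s$, which shifts the capacity or item set, and the lower-level max-knapsack with large coefficients encodes the existence of a falsifying $t$ through digit-wise (base-$B$) encoding of clauses. For $\NonNegativeIntegers$ I would force binariness with large coefficients. This is the paper's announced auxiliary ``minimax knapsack with a single shared capacity constraint'' result, and I expect it to be the main obstacle. The difficulty is designing the digit-encoding so that no carries occur and so that the capacity constraint alone enforces the clause checks.

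\textbf{Part (iv).} Hardness is as in (ii). Membership is a guess-and-check as before. With $\NConstraints_2=k$ and entries in $\{-1,0,1\}$, the lower-level right-hand side $b_2-A_{21}x_1$ has polynomial magnitude if $x_1$ is polynomially bounded. Papadimitriou's dynamic programming, which runs in time polynomial in $n$ and in $\max|a_{ij}|$ and $\max|b_i|$ for fixed $k$, then verifies optimality of $x_2$. In the binary case $x_1$ is automatically bounded. In the $\NonNegativeIntegers$ case the right-hand side may be large, so I would use the Eisenbrand--Weismantel proximity bound, under which the running time depends only on $\Delta$ and $k$ after an LP step.
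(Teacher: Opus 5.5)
\textbf{Parts (i), (ii) and (iv).} These are fine. For membership, you guess a polynomial-size solution via Proposition~\ref{prop:EstimateOfOptimalObjectiveValue} and verify lower-level optimality. You use Lenstra's algorithm for fixed $\NVariables_2$, and Hermite normal form plus Lenstra for $\Integers$ with fixed $\NConstraints_2$. This is the paper's argument. For (iv) you use a partial-sum dynamic program in the binary case and Eisenbrand--Weismantel proximity in the $\NonNegativeIntegers$ case, where $b_2 - A_{21}x_1$ may be exponentially large. That is a legitimate, more classical substitute for the paper's reduction to 4-block integer programming. One caveat: for the same reason, an algorithm pseudopolynomial in the right-hand side is \emph{not} polynomial in (ii) under \ref{Condition2}, but your HNF route covers that case anyway. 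Your hardness detour is unnecessary. With $\NVariables_2 = 0$, or a dummy zero-cost lower level, conditions \ref{Condition1} and \ref{Condition3} hold vacuously, and the 3SAT constraints can sit in the upper level as non-linking constraints.

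\textbf{The gap: hardness in (iii).} You only sketch the shape of a reduction and explicitly leave the encoding unresolved, so nothing is proved there. Also, appealing to Caprara et al.\ would give hardness only under \ref{Condition1}, not under the min--max condition \ref{Condition3} that the statement also claims.

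\textbf{The missing idea.} No new digit encoding is needed; use Krentel's reduction from 3SAT to KNAPSACK as a black box. In it, profits equal weights, and there are literal items $x,\bar x$ and slack items $y$ with three properties. (a) $(x',\OneVector - x',\ZeroVector)$ is always feasible. (b) Any feasible point of value at least $M$ has $x = \OneVector - \bar x$, with $x$ binary and satisfying $F$. (c) Every satisfying $x'$ extends to such a point. The standard trick (Papadimitriou, Problem 9.5.33) lets you set the capacity $W = M$ and relax the domain to $\NonNegativeIntegers$, keeping every point of value $W$ binary and preserving (a)--(c).

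\textbf{The construction.} For $F(s,t)$, hand the $s$-items to the leader. Then $\alpha^\top s$ enters the single lower-level constraint through $A_{21}$ and the leader's objective through $c_{11}$. The follower maximizes the profit $\bar\alpha^\top\bar s+\gamma^\top t+\bar\gamma^\top\bar t+\beta^\top y$ of the remaining items. This is also the rest of the leader's objective, so $c_{12}=-c_{22}$, there is no linking constraint, and $\NConstraints_2=1$. Since profit equals weight, the leader's objective is the total load, which is at most $W$. By (b) and (c), it equals $W$ iff the follower can take $\bar s=\OneVector-s$ together with some $t$ satisfying $F(s,t)$. Hence the min--max value is at most $W-1$ iff $\exists s\,\forall t\,[F(s,t)=0]$. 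Property (a) guarantees the lower level is feasible for every binary $s$. For $\NonNegativeIntegers$, add the non-linking upper-level constraints $s\le\OneVector$.
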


% \NagisaSideComment{\\Note that the interdiction KNAPSACK does not prove (iii) as the interdiction requires many lower-level constraints.}
We note that the hardness result in assertion~(iii) under condition~\ref{Condition1} is implied by the hardness result of a bilevel knapsack problem studied by \citet{caprara2014study}.
We establish the result additionally imposing condition~\ref{Condition3} by showing that a minmax knapsack problem with a single shared capacity constraint is \ClassSigmaP{2}-hard.

\begin{proof}
\mbox{}
\begin{enumerate}
\item[(i), (ii), (iv)]
The hardness follows from that of the feasibility problem of single-level binary LP.
The membership follows from Proposition~\ref{prop:EstimateOfOptimalObjectiveValue} and the polynomial-time solvability of the lower-level problem: Nondeterministically guess a feasible solution $(x_1, x_2)$ of polynomial encoding size whose objective value is less than or equal to the given threshold, and verify the feasibility, including the optimality with respect to the lower-level problem, in polynomial time.
When $\DomainPlaceholder \in \{\Integers, \NonNegativeIntegers, \BinarySet\}$ and $\NVariables_2$ is fixed, \citet{lenstra1983integer} shows that the lower-level problem can be solved in polynomial time.
When $\DomainPlaceholder = \Integers$ and $\NConstraints_2$ is fixed, one can compute the Hermite normal form of the constraint matrix \cite{Schrijver1998} and reduce the problem to an integer program in a fixed dimension, again solvable with the method of \citet{lenstra1983integer}.
When $\DomainPlaceholder \in \{\NonNegativeIntegers, \BinarySet\}$ and $\NConstraints_2$ is fixed together with condition~\ref{Condition2}, the lower-level problem can be transformed to a 4-block integer LP instance and can be solved with the method of \citet{Lassota2026} in polynomial time.

\item[(iii)]
The membership follows from Theorem~\ref{theorem:ComplexityOfBilevelIntegerLP}.
We show the hardness by modifying the reduction from~\ref{Eq:3SAT} to KNAPSACK presented by \citet{krentel1986complexity}; this reduction transforms $\BILPQFreeBooleanFormula \in \BILPSetOfThreeCNF$ into
$$
\max_{x, \bar{x}, y} \{
\alpha^{\top} x
+
\bar{\alpha}^{\top} \bar{x}
+
\beta^{\top} y
:
\alpha^{\top} x
+
\bar{\alpha}^{\top} \bar{x}
+
\beta^{\top} y
\le W,
x, \bar{x} \in \BinarySet^{\BILPSATNVar}, y \in \BinarySet^{\BILPSATNVar'}
\},
$$
where $\BILPSATNVar', W, \alpha, \bar{\alpha}, \beta$ are positive integer (vectors), alongside an integer $M$ such that the following holds:
\begin{enumerate}
\item
For any binary vector $x'$, $(x', \OneVector - x', \ZeroVector)$ is feasible;
\item
Any feasible point $(x, \bar{x}, y)$ with an objective value greater than or equal to $M$ satisfies $x = \OneVector - \bar{x}$, and $x$ is a binary vector satisfying $\BILPQFreeBooleanFormula$;
\item
For any binary vector $x'$ satisfying $\BILPQFreeBooleanFormula$, there exists a feasible point $(x, \bar{x}, y)$ such that $x = x'$, $\bar{x} =  \OneVector - x'$, and its objective value is greater than or equal to $M$.
\end{enumerate}

A standard trick for KNAPSACK (for example, see Problem 9.5.33 in \citet{papadimitriou1993computational}) provides a modification of the above construction such that we can 1) relax the binary domain to nonnegative integers and 2) restrict $W = M$ without altering the solutions.
Thus, we can assume that
$$
\max_{x, \bar{x}, y} \{
\alpha^{\top} x
+
\bar{\alpha}^{\top} \bar{x}
+
\beta^{\top} y
:
\alpha^{\top} x
+
\bar{\alpha}^{\top} \bar{x}
+
\beta^{\top} y
\le W,
x, \bar{x} \in \DomainPlaceholder^{\BILPSATNVar}, y \in \DomainPlaceholder^{\BILPSATNVar'}
\}
$$
is such that, for $\DomainPlaceholder \in \{\BinarySet, \NonNegativeIntegers\}$, any feasible solution with an objective value greater than or equal to $W$ is binary, and the properties~(a)--(c) above hold.

Now consider $\BILPBooleanFormula \in \BILPSetOfBooleanFormulae$ of form~\eqref{Eq:QSATH}, i.e., $\BILPBooleanFormula := \exists s \forall t [\BILPQFreeBooleanFormula(s, t) = 0]$ for some $\BILPQFreeBooleanFormula \in \BILPSetOfThreeCNF$.
Following the same argument, we obtain
$$
\min_{s \in \BinarySet^{\BILPSATNVar}}
\left\{
\alpha^{\top} s +
\max_{\bar{s}, t, \bar{t}, y}
\left\{
\bar{\alpha}^{\top} \bar{s} + \gamma^{\top} t + \bar{\gamma}^{\top} \bar{t} + \beta^{\top} y
:
\begin{array}{l}
\alpha^{\top} s + \bar{\alpha}^{\top} \bar{s} + \gamma^{\top} t + \bar{\gamma}^{\top} \bar{t} + \beta^{\top} y \le W,
\\
\bar{s}, t, \bar{t} \in \DomainPlaceholder^{\BILPSATNVar}, y \in \DomainPlaceholder^{\BILPSATNVar'}
\end{array}
\right\}
\right\},
$$
where the inner maximization represents a KNAPSACK instance.
The optimal objective value of this minmax problem is less than or equal to $W - 1$ if and only if $\BILPBooleanFormula$ is true.
Note that in case of $\DomainPlaceholder = \NonNegativeIntegers$, $s \in \BinarySet^{\BILPSATNVar}$ can be modeled as $s \in \NonNegativeIntegers^{\BILPSATNVar}$ together with non-linking upper-level constraints $s \le \OneVector{}$.
This establishes the \ClassSigmaP{2}-hardness.

\end{enumerate}
\end{proof}

More generally, if we impose an efficiently verifiable structural assumption that guarantees polynomial-time solvability of the lower-level problem, the complexity of the resulting decision problem is in \ClassNP{}. 
For example, when the lower-level constraint matrix $A_{22}$ is restricted to be totally unimodular, the decision problems \DecisionProblemBilevelIntegerLP{}, \DecisionProblemBilevelNonNegativeIntegerLP{}, and \DecisionProblemBilevelBinaryLP{} are \ClassNP{}-complete. 
Note that total unimodularity can be recognized in polynomial time~\cite{Schrijver1998}.
% If $A_{2 2}$ has full column rank and is totally bimodular, the lower-level problem is tractable, but the recognition of such problem is open \citet{stephan2017}.
% see https://arxiv.org/pdf/2106.14980

\subsection{Fixed Number of Upper-Level Variables and Constraints}

In this section, we study cases in which the number of upper-level variables is fixed and there are no upper-level constraints.
Note that when there are no upper-level constraints, condition~\ref{Condition1} holds vacuously.

\begin{proposition}
For any $k \in \PositiveIntegers$, the decision problems \DecisionProblemBilevelIntegerLP{} and \DecisionProblemBilevelNonNegativeIntegerLP{} with the input restricted to satisfy $\NVariables_{1} = k$ and $\NConstraints_{1} = 0$ are \ClassSigmaP{2}-complete.
Moreover, this remains valid even under the additional assumptions of conditions~\ref{Condition2} and~\ref{Condition3}.
\end{proposition}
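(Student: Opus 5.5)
Membership follows directly from Theorem~\ref{theorem:ComplexityOfBilevelIntegerLP}, since the restricted problems are special cases of \DecisionProblemBilevelIntegerLP{} and \DecisionProblemBilevelNonNegativeIntegerLP{}. For hardness it suffices to treat $k=1$. For larger $k$ we add $k-1$ dummy upper-level variables that have zero cost and appear in no constraint. This changes neither the optimal value nor conditions~\ref{Condition2}--\ref{Condition3}. In the case $\DomainPlaceholder=\Integers$, the dummy variables range over all of $\Integers$; since they appear nowhere, any choice is equivalent.

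We reduce from the $\ClassSigmaP{2}$-complete problem on $\BILPBooleanFormula\in\BILPSetOfBooleanFormulae$ of form~\eqref{Eq:QSATH}, starting from Jeroslow's minmax instance~\eqref{Eq:Jeroslow}. The idea is to compress the $\BILPSATNVar$ upper-level binary variables $s$ into a single integer upper-level variable $x_1$. The lower level then decodes $x_1$ into bits $s_1,\dots,s_{\BILPSATNVar}$ using constraints with coefficients in $\{-1,0,1\}$.

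\textbf{Decoding constraints.} We introduce lower-level integer variables $r_0,\dots,r_{\BILPSATNVar}$, $r'_1,\dots,r'_{\BILPSATNVar}$ and $s_1,\dots,s_{\BILPSATNVar}$. Each equality below is written as a pair of $\ge$ inequalities:
\[
r_0 = x_1,\quad r_i' = r_i,\quad r_{i-1} = r_i + r_i' + s_i,\quad 0\le s_i\le 1\ (i=1,\dots,\BILPSATNVar),\quad r_{\BILPSATNVar}=0 .
\]
All coefficients lie in $\{-1,0,1\}$. These constraints force $r_{i-1}=2r_i+s_i$, and hence $x_1=\sum_i 2^{i-1}s_i$ with $s$ binary. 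The lower level is therefore feasible in the decoding part exactly when $0\le x_1\le 2^{\BILPSATNVar}-1$, and in that case $s$ is the unique binary expansion of $x_1$. Every other $x_1$, including negative ones when $\DomainPlaceholder=\Integers$, makes the lower level infeasible. By the convention after~\eqref{Eq:MinMaxWithoutUpperLevelConstraints}, such $x_1$ are infeasible for the bilevel instance. This is why no upper-level constraints are needed.

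\textbf{Remaining lower-level constraints.} Beyond the decoding block, the lower level contains the constraints of~\eqref{Eq:Jeroslow} in the $\{-1,0,1\}$ form of~\eqref{SAT:entresin01}. That is, $A_{\BILPQFreeBooleanFormula}(s,t)\ge v\OneVector - B_{\BILPQFreeBooleanFormula}w$ and $w=\OneVector$, together with $0\le t\le\OneVector$ and $0\le v\le 1$. The lower-level objective is to maximize $v$, i.e.\ $c_{22}$ has entry $-1$ on $v$ and $0$ elsewhere. We set $c_{12}=-c_{22}$ and $c_{11}=0$, so condition~\ref{Condition3} holds, and we take threshold $\DecisionVersionThreshold=0$.

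\textbf{Correctness.} Fix a feasible $x_1$. Then $s$, the $r$'s and $w$ are uniquely determined, and the follower freely chooses $(t,v)$ to maximize $v$. The set of optimal lower-level responses thus has a single objective value. This value is $1$ if some $t$ falsifies $\BILPQFreeBooleanFormula(s,t)=0$ in the sense of~\eqref{Eq:Jeroslow}, and $0$ otherwise. Because the instance is minmax, the upper-level objective of any optimal response equals this value regardless of optimistic tie-breaking. Hence the bilevel optimum is at most $0$ if and only if some $s\in\BinarySet^{\BILPSATNVar}$ makes $\forall t$ hold, i.e.\ if and only if $\BILPBooleanFormula$ is true. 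Since $v$ is bounded, the instance is never unbounded. The construction has polynomial size, and all data, including $\DecisionVersionThreshold$, lie in $\{-1,0,1\}$.

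\textbf{Main obstacle.} The step needing most care is the decoding. I must check three things:
\begin{enumerate}
\item the chain of $\pm1$ equalities has exactly one integer solution for each $x_1$ in range, which follows by induction from $r_{\BILPSATNVar}=0$ downward, using $s_i\in\{0,1\}$ and the uniqueness of the binary expansion;
\item every out-of-range $x_1$ yields lower-level infeasibility, not some spurious solution;
\item the decoding variables carry zero cost, so they do not interfere with the lower-level optimality of $v$.
\end{enumerate}
For $\DomainPlaceholder=\NonNegativeIntegers$ the argument is identical, since all decoding variables are nonnegative in any feasible solution anyway.
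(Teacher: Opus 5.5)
Your proposal is correct and takes the same route as the paper. Both replace Jeroslow's binary leader vector $s$ by a single integer upper-level variable, which the lower level decodes through its binary expansion; out-of-range values are excluded by lower-level infeasibility, and auxiliary variables then bring all data into $\{-1,0,1\}$. Your doubling chain $r_{i-1}=r_i+r_i'+s_i$, $r_i'=r_i$ spells out the auxiliary-variable step that the paper only asserts. Your explicit lower bounds $0\le s,t,v$ are actually needed in the $\Integers$ case, because the paper's displayed instance lists only the upper bounds $s,t\le\OneVector$, $v\le 1$.
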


\begin{proof}
Membership in \ClassSigmaP{2} follows from Theorem~\ref{theorem:ComplexityOfBilevelIntegerLP}.
Thus, we only need to establish hardness.
For $\DomainPlaceholder \in \{\Integers, \NonNegativeIntegers\}$ and $\BILPBooleanFormula \in \BILPSetOfBooleanFormulae$ of form~\eqref{Eq:QSATH}, consider the following instance:
\begin{align}
\label{Eq:JeroslowWithSingleUpperLevelVariables}
\min_{\theta \in \DomainPlaceholder}
\max_{s, t, v}
\left\{
v :
\begin{array}{l}
A_{\BILPQFreeBooleanFormula}
\begin{pmatrix}
s \\ t
\end{pmatrix}
\ge
v \OneVector
- b_{\BILPQFreeBooleanFormula},
\\
2^{\BILPSATNVar - 1} s_{1}
+
2^{\BILPSATNVar - 2} s_{2}
+
\cdots
+
s_{\BILPSATNVar}
=
\theta,
\\
t\leq \OneVector, v\leq 1, s\leq \OneVector, t \in \DomainPlaceholder^{\BILPSATNVar}, v \in \DomainPlaceholder, s \in \DomainPlaceholder^{\BILPSATNVar}
\end{array}
\right\}.
\end{align}
We have $\OptValFunc\eqref{Eq:JeroslowWithSingleUpperLevelVariables} = \OptValFunc\eqref{Eq:Jeroslow}$, and one can obtain an equivalent instance satisfying condition~\ref{Condition2} by introducing a polynomial number of auxiliary variables as in~\eqref{SAT:entresin01}.
\end{proof}

Next, we consider binary instances.
For each $l \in \PositiveIntegers{}$, we denote by $\ClassBH{l}$ the $l$-th level of the Boolean hierarchy~\cite{cai1988boolean}.
It is defined as follows:
\begin{alignat*}{2}
\ClassBH{1} &= \ClassNP{},
\\
\ClassBH{l} &= \{ L_1 \cap \overline{L_2} : L_1 \in \ClassBH{l - 1}, L_2 \in \ClassNP{}\}, && \ \ \forall l = 2, 4, \ldots,
\\
\ClassBH{l} &= \{ L_1 \cup L_2 : L_1 \in \ClassBH{l - 1}, L_2 \in \ClassNP{}\}, && \ \ \forall l = 3, 5, \ldots.
\end{alignat*}
In particular, $\ClassBH{2} = \ClassDP{}$, the complexity class introduced by~\citet{papadimitriou1982complexity}.
% THEOREM 2.1.1
% There exist various characterizations of the Boolean hierarchy.
% In particular, 
For each $l \ge 1$, we have~\cite{cai1988boolean}
\begin{align}
\label{Eq:BHAsUnionOfSetDifferences}
\ClassBH{2 l}
&=
\left\{
\bigcup_{i = 1, \ldots, l}
(L_{2 i - 1} \cap \overline{L_{2 i}})
:
L_i \in \ClassNP{}, i = 1, \ldots, 2 l
\right\}.
\end{align}

\begin{proposition}
\label{prop:BILPFixedUpperLevelVariables}
\mbox{}
For any $k \in \NonNegativeIntegers$, the following statements hold:
\begin{enumerate}
\item
% \NagisaSideComment{\\The claim holds even with $n_1 = 0$.}
The decision problem \DecisionProblemBilevelBinaryLP{} with the input restricted to satisfy $\NVariables_{1} = k$ and $\NConstraints_{1} = 0$ is \ClassDeltaP{2}-complete;
\item
The decision problem \DecisionProblemBilevelBinaryLP{} with the input restricted to satisfy $\NVariables_{1} = k$, $\NConstraints_{1} = 0$, and condition~\ref{Condition2} is \ClassThetaP{2}-complete;
\item
The decision problem \DecisionProblemBilevelBinaryLP{} with the input restricted to satisfy $\NVariables_{1} = k$, $\NConstraints_{1} = 0$, and condition~\ref{Condition3} is \ClassBH{2^{k+1}}-complete.
Moreover, this remains valid even under the additional assumption of condition~\ref{Condition2}.
\end{enumerate}
\end{proposition}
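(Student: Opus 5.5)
The plan is to exploit that, with $\NVariables_1 = k$ fixed, binary upper-level variables and $\NConstraints_1 = 0$, there are only $2^k$ candidate upper-level decisions $x_1 \in \BinarySet^k$. Each candidate can be enumerated. For each one the lower-level problem is a single-level binary LP, whose optimal value can be located with an \ClassNP{} oracle. The three parts then differ in how many oracle queries are needed and whether they must be adaptive. Hardness in each part will come from a canonical complete problem encoded through the satisfiability gadget of Section~\ref{sec:BILPFormulationsBooleanSatisfiabilityProblems}.

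\emph{Part (i).} For membership, fix each $x_1 \in \BinarySet^k$ in turn.
\begin{enumerate}
\item Decide lower-level feasibility with one \ClassNP{} query.
\item Binary-search the lower-level optimal value $V_2(x_1)$. Its encoding size is polynomial, so polynomially many adaptive queries of the form ``$\exists x_2$ feasible with $c_{22}^\top x_2 \le z$'' suffice.
\item Ask one more query: ``$\exists x_2$ feasible with $c_{22}^\top x_2 \le V_2(x_1)$ and $c_{11}^\top x_1 + c_{12}^\top x_2 \le \DecisionVersionThreshold$''.
\end{enumerate}
The instance is a yes-instance iff some $x_1$ passes step (iii), giving $\ClassP^{\ClassNP} = \ClassDeltaP{2}$. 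For hardness, which works already with $\NVariables_1 = 0$, I would reduce from Krentel's \ClassDeltaP{2}-complete problem: ``is the last bit of the lexicographically maximum satisfying assignment of $\BILPQFreeBooleanFormula$ equal to $1$?'' The lower level maximizes $2^{\BILPSATNVar}v + \sum_i 2^{\BILPSATNVar - i} u_i$ subject to $A_{\BILPQFreeBooleanFormula} u \ge v\OneVector - b_{\BILPQFreeBooleanFormula}$. The upper level minimizes $-u_{\BILPSATNVar}$ with threshold $-1$. The exponential coefficients are allowed here, since condition~\ref{Condition2} is not imposed.

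\emph{Part (ii).} Under condition~\ref{Condition2}, every lower-level objective value lies in $[-\NVariables_2, \NVariables_2]$. Hence binary search needs only $O(\log \NVariables_2)$ queries, and the upper-objective value also lies in a polynomial range. Equivalently, all polynomially many queries ``$V_2 \le z$'' and ``$\exists x_2$ with value $\le z$ and upper value $\le \DecisionVersionThreshold$'' can be asked in parallel for every $z$ and $x_1$. This gives $\ClassP^{\ClassNP[\log]} = \ClassThetaP{2}$.

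For hardness I would reduce from a \ClassThetaP{2}-complete parity problem, in the style of Wagner: ``is the maximum number of true variables over satisfying assignments of $\BILPQFreeBooleanFormula$ odd?'' The construction goes as follows.
\begin{itemize}
\item The lower level maximizes $\BILPSATNVar v + \sum_i u_i$ subject to the unary-encoded clause constraints from~\eqref{SAT:entresin01}, all with entries in $\{-1, 0, 1\}$. The weight $\BILPSATNVar$ on $v$ is realized by $\BILPSATNVar$ copies $v_j$ forced equal.
\item Add binary lower-level variables $z_j, z_j', p$ with $\sum_i u_i = \sum_j (z_j + z_j') + p$ and $z_j = z_j'$, so that $p$ records the parity of $\sum_i u_i$.
\item The upper objective is $-p$, with threshold $-1$.
\end{itemize}
Every lower-level optimum has the same $\sum_i u_i$, so $p$ is determined, and the optimistic choice is irrelevant. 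I expect this step to be the main obstacle: the parity and weight gadgets must keep all data in $\{-1, 0, 1\}$ while leaving $p$ uniquely determined.

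\emph{Part (iii).} For the minmax case, $\min_{x_1}\{c_{11}^\top x_1 + \max_{x_2}\{\dots\}\} \le \DecisionVersionThreshold$ holds iff there exists $x_1 \in \BinarySet^k$ satisfying two conditions:
\begin{itemize}
\item the lower level is feasible, which is an \ClassNP{} condition;
\item no feasible $x_2$ has $c_{12}^\top x_2 > \DecisionVersionThreshold - c_{11}^\top x_1$, which is a co\ClassNP{} condition.
\end{itemize}
So the language is a union of $2^k$ sets of the form $L_{2i-1} \cap \overline{L_{2i}}$, which lies in $\ClassBH{2^{k+1}}$ by~\eqref{Eq:BHAsUnionOfSetDifferences}.

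For hardness, start from $2^{k+1}$ 3CNF formulae $F_1, \ldots, F_{2^{k+1}}$. Index pairs by $x_1 \in \BinarySet^k$, and introduce lower-level indicator variables $e_i = [x_1 = i]$. These are forced by linear AND-constraints on the bits of $x_1$ with $\pm 1$ coefficients; constant right-hand sides are handled with the fixed auxiliary variables $w = \OneVector$ from~\eqref{SAT:entresin01}. Then add the following blocks.
\begin{itemize}
\item For each $i$, add a copy of $F_{2i-1}$ with clause constraints $A u^{i} \ge e_i\OneVector - b$. The lower level is then infeasible exactly when the selected $F_{2i-1}$ is unsatisfiable.
\item For each $i$, add a copy of $F_{2i}$ with constraints $A t^{i} \ge (v + e_i - 1)\OneVector - b$, together with $v \le e$-linked bounds as needed.
\item The lower level maximizes $v$; the threshold is $0$, with $c_{11} = \ZeroVector$.
\end{itemize}
Then some $x_1$ is feasible with inner value $0$ iff the selected $F_{2i-1}$ is satisfiable and the selected $F_{2i}$ is not. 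Finally, check that non-selected blocks are always satisfiable (they are deactivated), and that unary encoding keeps condition~\ref{Condition2}.
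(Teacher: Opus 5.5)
Your membership arguments for all three parts match the paper's. Your hardness proof for (iii) is a valid alternative route. The paper uses the ``odd number of satisfiable formulae'' problem with the threshold constraint $q + 1 + \sum_{i} 2^{i} p_{i} \le \sum_i v_i$. You instead reduce from ``$\exists i$ with $F_{2i-1}$ satisfiable and $F_{2i}$ unsatisfiable'', which is $\ClassBH{2^{k+1}}$-complete directly by~\eqref{Eq:BHAsUnionOfSetDifferences}, and let $x_1$ select the active pair through indicators; you should state this source problem explicitly.

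The hardness reductions for (i) and (ii), however, fail as written, and in both cases the culprit is the extra lower-level variable $v$.
\begin{itemize}
\item In (i), if $F$ is unsatisfiable then $v = 0$, and $A_F u \ge -b_F$ holds for every binary $u$. The unique lower-level optimum is then $u = \OneVector$, with upper value $-u_r = -1$, which meets the threshold. So every unsatisfiable formula, a no-instance of LEXSAT, is mapped to a yes-instance.
\item In (ii), the weight $r$ on $v$ is too small. Suppose the maximum number of true variables over satisfying assignments is $0$. Then $(v,u) = (1,\ZeroVector)$ and $(v,u) = (0,\OneVector)$ are both lower-level optimal with value $r$. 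Hence $\sum_i u_i \in \{0, r\}$ is not determined, and the optimistic leader takes $p = 1$ whenever $r$ is odd, although $0$ is even.
\item Also in (ii), for unsatisfiable $F$ the instance simply reports the parity of $r$.
\end{itemize}
So your claim that all lower-level optima share the same $\sum_i u_i$ fails exactly at the step you flagged.

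The repair in both parts is what the paper does in~\eqref{Eq:LEXSATBilevelFormulation}: drop $v$ and impose $A_F u \ge \OneVector - b_F$. Under~\ref{Condition2}, write this as $A_F u + B_F w \ge \OneVector$ with $w = \OneVector$.
\begin{itemize}
\item An unsatisfiable $F$ then makes the lower level, and hence the bilevel instance, infeasible, which is automatically a no-instance.
\item For satisfiable $F$, the lower-level optimum of $\sum_i 2^{r-i} u_i$ is unique in $u$, and the optimal value of $\sum_i u_i$ fixes $\sum_i u_i$. So $u_r$ (resp.\ $p$) is determined.
\item In (ii) this also makes the weight gadget built from copies of $v$ unnecessary.
\end{itemize}
If you prefer to keep $v$ in (i), adding the constraint $u_r \le v$ also works.

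Finally, the $\ClassThetaP{2}$-completeness of your parity problem in (ii) needs a citation or a short argument. For example, reduce from the parity of the number of satisfiable formulae among polynomially many $F_1, \ldots, F_m$: use selector variables $y_i$, plus complementary copies that make the true-count equal a constant plus $\sum_i y_i$. The paper avoids this by using Krentel's MODSAT, whose clause-indicator formulation is always feasible with a well-defined maximum.
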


\begin{proof}
\begin{enumerate}
\item
\textbf{Membership.}
We demonstrate the existence of a polynomial-time Turing machine $M$ with an \ClassNP{} oracle that computes the optimal objective value (or reports infeasibility or unboundedness), from which the result follows.
Note that given $\NConstraints_1 = 0$,
$
\OptValFunc\eqref{Eq:BILP} =
\min_{x_1 \in \BinarySet^{k}}
\{
\phi(x_1)
\},
$
where
\begin{equation*}
\phi(x_1)
:=
c_{1 1}^{\top} x_1 +
\min_{x_2}
\left\{
c_{1 2}^{\top} x_2
:
x_2
\in
\Argmin_{x_2'}
\left\{
c_{2 2}^{\top} x_2'
:
\begin{array}{l}
A_{2 1} x_1 + A_{2 2} x_2' \ge b_2,
\\
x_2' \in \BinarySet^{\NVariables_2}
\end{array}
\right\}
\right\}.
\end{equation*}
The machine $M$ evaluates $\phi(x_1)$ for each $x_1 \in \{0, 1\}^{k}$ and outputs the minimum value among them.
For each $x_1 \in \{0, 1\}^k$, $M$ queries whether
$$
\min_{x_2}
\{
c_{2 2}^{\top} x_2 :
A_{2 1} x_1 + A_{2 2} x_2
\BILPConstraintSense
b_2,
x_2 \in \BinarySet^{\NVariables_2}
\}
$$
is feasible.
If not, the current choice of $x_1$ is infeasible. %, i.e., $\phi(x_1) = \infty$.
Otherwise, using binary search, $M$ computes its optimal objective value $v'$.
Now, observe
$$
\phi(x_1)
= c_{1 1}^{\top} x_1 +
\min_{x_2}
\{
c_{1 2}^{\top} x_2
:
c_{2 2}^{\top} x_2 \le v',
A_{2 1} x_1 + A_{2 2} x_2
\BILPConstraintSense
b_2,
x_2 \in \BinarySet^{\NVariables_2}
\},
$$
which can also be computed via binary search.
Each evaluation of $\phi(x_1)$ runs in polynomial time, which implies the claim.

\noindent
\textbf{Hardness.}
We show hardness via a reduction from the following problem:
\begin{equation}
\tag{LEXSAT}
\label{Eq:LEXSAT}
\parbox{0.8\textwidth}{%
Given $\BILPQFreeBooleanFormula \in \BILPSetOfThreeCNF$, is $\BILPQFreeBooleanFormula$ satisfiable, and does the lexicographically maximum satisfying assignment have least significant bit equal to $1$?
}
\end{equation}
This is \ClassDeltaP{2}-complete~\cite{krentel1986complexity}.
Let $\BILPQFreeBooleanFormula \in \BILPSetOfThreeCNF$ with $\BILPSATNVar$ Boolean variables and consider
% \NagisaSideComment{This does not have any upper-level variables or constraints.}
\begin{equation}
\label{Eq:LEXSATBilevelFormulation}
\min_{u}
\left\{
-u_{\BILPSATNVar}
:
u
\in \Argmax_{u'}
\left\{
\sum_{i = 1}^{\BILPSATNVar} 2^{\BILPSATNVar - i} u_{i}'
:
A_{\BILPQFreeBooleanFormula} u' \ge \OneVector - b_{\BILPQFreeBooleanFormula},
u' \in \BinarySet^{\BILPSATNVar}
\right\}
\right\},
\end{equation}
which evaluates to $-1$ or less if and only if $\BILPQFreeBooleanFormula$ is a ``yes'' instance for~\ref{Eq:LEXSAT}.

\item
\textbf{Membership.}
It follows since the machine described in the proof of assertion~(i) makes a logarithmic number of queries under condition~\ref{Condition2}.

\noindent
\textbf{Hardness.}
We show hardness via a reduction from the following problem:
\begin{equation}
\tag{MODSAT}
\label{Eq:KMult}
\parbox{0.8\textwidth}{%
Given $\BILPQFreeBooleanFormula \in \BILPSetOfThreeCNF$ and an integer $\DecisionVersionThreshold$, is the maximum number of simultaneously satisfiable clauses in $\BILPQFreeBooleanFormula$ divisible by $\DecisionVersionThreshold$?
}
\end{equation}
This is \ClassThetaP{2}-complete~\cite{krentel1986complexity}.
Let $\BILPQFreeBooleanFormula \in \BILPSetOfThreeCNF$ with $\BILPSATNVar$ Boolean variables and $\BILPSATNClause$ clauses, and let $\DecisionVersionThreshold \in \NonNegativeIntegers$ be an integer between $2$ and $\BILPSATNClause$. 
Consider
\begin{equation}
\label{eq:ModSatBilevelFormulation}
\min_{u, v, \alpha, \beta}
\left\{
\sum_{i = 1}^{\DecisionVersionThreshold - 1} \beta_{i}
:
\begin{pmatrix}
u \\ v \\ \alpha \\ \beta
\end{pmatrix}
\in \Argmax_{u', v', \alpha', \beta'}
\left\{
\sum_{i = 1}^{\BILPSATNClause} v_{i}'
:
\begin{array}{ll}
A_{\BILPQFreeBooleanFormula} u' \ge v' - b_{\BILPQFreeBooleanFormula},
\\
\displaystyle
\DecisionVersionThreshold \alpha'
+
\sum_{i = 1}^{\DecisionVersionThreshold - 1} \beta_i'
=
\sum_{i = 1}^{\BILPSATNClause} v_i',
\\
\alpha' \le \lceil \BILPSATNClause / \DecisionVersionThreshold \rceil,
\\
u' \in \BinarySet^{\BILPSATNVar}, v' \in \BinarySet^{\BILPSATNClause}, \alpha' \in \NonNegativeIntegers, \beta' \in \BinarySet^{\DecisionVersionThreshold - 1}
\end{array}
\right\}
\right\}.
\end{equation}
Let $l$ be the maximum number of simultaneously satisfiable clauses in $\BILPQFreeBooleanFormula$.
Any optimal solution to the lower-level problem satisfies $\sum_{i = 1}^{\BILPSATNClause} v_{i}' = l$ and $\sum_{i = 1}^{\DecisionVersionThreshold - 1} \beta_i' \equiv l \pmod \DecisionVersionThreshold$.
Thus, $\OptValFunc\eqref{eq:ModSatBilevelFormulation} \le 0$ if and only if $(\BILPQFreeBooleanFormula, \DecisionVersionThreshold)$ is a ``yes'' instance for~\ref{Eq:KMult}.
Note that the resulting instance contains non-binary variables as well as coefficients larger than $1$.
However, since $\DecisionVersionThreshold$ is at most $\BILPSATNClause$, one can obtain an equivalent instance satisfying condition~\ref{Condition2} by introducing a polynomial number of auxiliary binary variables.

\item
\textbf{Membership.}
With~\ref{Condition3} and $\NConstraints_1 = 0$, 
% the problem in \eqref{Eq:BILP} can be rewritten as
$
\OptValFunc\eqref{Eq:BILP} =
\min_{x_1 \in \BinarySet^{k}}
\left\{
\phi'(x_1)
\right\},
$
where
$$
\phi'(x_1) :=
c_{1 1}^{\top} x_1
+
\max_{x_2}
\left\{
c_{1 2}^{\top} x_2
:
A_{2 1} x_1 + A_{2 2} x_2 \ge b_2,
\
x_2 \in \BinarySet^{\NVariables_2}
\right\}
$$
if the lower-level problem is feasible, and $\phi'(x_1) = \infty$ otherwise.
% We note that $\min_{x_1 \in \BinarySet^{k}} \left\{ \phi'(x_1) \right\} \le \DecisionVersionThreshold$ if and only if
The following equivalent characterization of the condition $\min_{x_1 \in \BinarySet^{k}} \left\{ \phi'(x_1) \right\} \le \DecisionVersionThreshold$ implies the membership:
\begin{align*}
&
\underbrace{\bigvee_{x_1 \in \BinarySet^{k}}}_{\substack{\text{There exists}\\\text{$x_1$ such that}}}
\left(
\underbrace{
\left(
\max_{x_2 \in \BinarySet^{\NVariables_2}}
\left\{
0
:
A_{2 1} x_1 + A_{2 2} x_2 \ge b_2
\right\}
\ge
0
\right)
}_{\text{the lower-level problem is feasible}}
\right.
\\
&
\qquad\qquad
\bigwedge
\left.
\underbrace{
\neg
\left(
\max_{x_2 \in \BinarySet^{\NVariables_2}}
\left\{
c_{1 2}^{\top} x_2
:
A_{2 1} x_1 + A_{2 2} x_2 \ge b_2
\right\}
\ge
\DecisionVersionThreshold + 1 - c_{1 1}^{\top} x_1
\right)
}_{\text{the lower-level problem is infeasible or $\phi'(x_1) \le \DecisionVersionThreshold$}}
\right).
\end{align*}
% Together with~\eqref{Eq:BHAsUnionOfSetDifferences}, this establishes membership in \ClassBH{2^{k + 1}}.

\noindent
\textbf{Hardness.}
We consider a reduction from the following decision problem:
\begin{equation}
\notag
\parbox{0.8\textwidth}{Given $\BILPQFreeBooleanFormula_1, \ldots, \BILPQFreeBooleanFormula_{2^{k + 1}} \in \BILPSetOfThreeCNF$, is the number of satisfiable formulae odd?}
\end{equation}
This is known to be \ClassBH{2^{k + 1}}-complete~\cite{kobler1987difference}.
For $i = 1, \ldots, 2^{k + 1}$, let $\BILPQFreeBooleanFormula_i \in \BILPSetOfThreeCNF$ with $\BILPSATNVar_i$ Boolean variables and consider
\begin{align}
\label{Eq:OddKSATBilevel}
\min_{p \in \BinarySet^{k}}
\left\{
\max_{q, u, v}
\left\{
q
:
\begin{array}{l}
A_{\BILPQFreeBooleanFormula_i} u_{i} \ge v_{i} \OneVector - b_{\BILPQFreeBooleanFormula_i},
\\
\displaystyle
q + 1 + \sum_{i = 1}^{k} 2^{i} p_{i} \le \sum_{i = 1}^{2^{k + 1}} v_i,
\\
q \in \BinarySet, u_i \in \BinarySet^{\BILPSATNVar_i}, i = 1, \ldots, 2^{k + 1}, v \in \BinarySet^{2^{k + 1}}
\end{array}
\right\}
\right\}.
\end{align}
Let $l$ denote the number of satisfiable formulae. 
The lower-level problem is infeasible if $1 + \sum_{i=1}^k 2^i p_i > l$, has optimal value $0$ if $1 + \sum_{i=1}^k 2^i p_i = l$, and optimal value $1$ otherwise.
Hence, $\OptValFunc\eqref{Eq:OddKSATBilevel} \le 0$ if and only if $l$ is odd. 
The resulting instance can be reformulated to satisfy condition~\ref{Condition2}.
\end{enumerate}
\end{proof}

The assumption $m_1 = 0$ in assertions~(i) and~(ii) can be relaxed with only minor modifications to the proofs (see \cite{koppe2010parametric}). 
In contrast, the assumption is essential in~(iii): \DecisionProblemBilevelBinaryLP{} with $n_1 = 0, m_1 = 1$ and condition~\ref{Condition3} is \ClassDeltaP{2}-complete, as a modification of \eqref{Eq:LEXSATBilevelFormulation} into a minmax problem with a single linking constraint $u_r \le -1$ yields an instance that is feasible if and only if the \ref{Eq:LEXSAT} instance is a ``yes'' instance.

When $\NVariables_1 = 0$, the proofs for Proposition~\ref{prop:BILPFixedUpperLevelVariables} extend to bilevel integer LP easily.
% The proof is a straightforward adaptation of that of Proposition~\ref{prop:BILPFixedUpperLevelVariables} and is hence omitted.
% (the hardness result in assertion~(i) follows from reformulating~\eqref{Eq:LEXSATBilevelFormulation} as a \BILPInteger\textbf{}s{} or \BILPNonNegativeIntegers{} instance with condition~\ref{Condition2} using auxiliary variables).

\begin{proposition}
\label{prop:BilevelIntegerLPWithoutUpperLevelVariables}
The following statements hold:
\begin{enumerate}
\item
The decision problems \DecisionProblemBilevelIntegerLP{} and \DecisionProblemBilevelNonNegativeIntegerLP{} with the input restricted to satisfy $\NVariables_{1} = 0$ are \ClassDeltaP{2}-complete.
Moreover, this remains valid even under the additional assumptions of condition~\ref{Condition2} and $\NConstraints_{1} = 0$.
\item
The decision problems \DecisionProblemBilevelIntegerLP{} and \DecisionProblemBilevelNonNegativeIntegerLP{} with the input restricted to satisfy $\NVariables_{1} = 0$, $\NConstraints_{1} = 0$, and condition~\ref{Condition3} are \ClassDP{}-complete.
Moreover, this remains valid even under the additional assumption of condition~\ref{Condition2}.
\end{enumerate}
\end{proposition}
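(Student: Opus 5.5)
The proof will follow the proof of Proposition~\ref{prop:BILPFixedUpperLevelVariables}, with $k = 0$, adapted to unbounded integer domains. The only new ingredient is that, over $\Integers$ or $\NonNegativeIntegers$, the lower-level problem is an integer LP rather than a binary LP. Its feasible set may be unbounded, and its values need not be bounded a priori. Hardness transfers almost verbatim, since a binary variable can be simulated by an integer variable with bounds $0 \le x \le 1$. These bounds are lower-level constraints, which is allowed because $\NConstraints_1 = 0$ only restricts upper-level constraints.

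\textbf{Membership in (i).}
With $\NVariables_1 = 0$, the instance is of form~\eqref{Eq:BilevelWithoutUpperLevelVariablesOrConstraints} plus possibly upper-level constraints $A_{12} x_2 \ge b_1$. I will design a polynomial-time machine with an \ClassNP{} oracle that runs in four steps.
\begin{enumerate}
\item Decide whether the lower-level integer program is feasible. This is an \ClassNP{} query, justified by the polynomial solution-size bound of Proposition~\ref{prop:EstimateOfOptimalObjectiveValue} applied with $\NVariables_2' = 0$ (or by the classical ILP bound).
\item Decide whether the lower level is unbounded. This happens iff it is feasible and the LP recession cone contains a rational ray $d$ with $A_{22} d \ge \ZeroVector$ and $c_{22}^\top d < 0$ (and $d \ge \ZeroVector$ for $\NonNegativeIntegers$), which is polynomially checkable. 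If the lower level is unbounded, no $x_2$ is lower-level optimal, so the instance is infeasible.
\item Otherwise, the lower-level optimal value $v'$ has polynomial encoding size, again by Proposition~\ref{prop:EstimateOfOptimalObjectiveValue}. Find $v'$ by binary search using \ClassNP{} queries of the form ``is there a lower-level feasible $x_2$ with $c_{22}^\top x_2 \le w$''.
\item Solve the single-level integer program $\min\{c_{12}^\top x_2 : A_{12}x_2 \ge b_1, A_{22} x_2 \ge b_2, c_{22}^\top x_2 \le v'\}$. Detect its infeasibility and unboundedness the same way, then binary search over its polynomially bounded optimal value.
\end{enumerate}
Polynomially many oracle calls give membership in \ClassDeltaP{2}.

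\textbf{Hardness in (i).}
I will reuse~\eqref{Eq:LEXSATBilevelFormulation}, replacing $u' \in \BinarySet^{\BILPSATNVar}$ by $u' \in \DomainPlaceholder^{\BILPSATNVar}$ together with lower-level constraints $\ZeroVector \le u' \le \OneVector$. The large coefficients $2^{\BILPSATNVar - i}$ must then be removed to satisfy \ref{Condition2}. For this I will introduce chains of auxiliary lower-level variables $z_{i,0} = u_i'$ and $z_{i,j+1} = 2 z_{i,j}$, each doubling written as two inequalities with coefficients in $\{-1,1\}$ via a copy variable: $z_{i,j+1} - z_{i,j} - z'_{i,j} = 0$ with $z'_{i,j} = z_{i,j}$. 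The objective then becomes a $\pm 1$ sum of chain endpoints. Right-hand sides $b_F$ up to $3$ are handled by the unary trick of~\eqref{SAT:entresin01}, with $w$ fixed to $\OneVector$ by lower-level constraints. Since everything stays in the lower level, $\NConstraints_1 = 0$ is preserved. The main obstacle is this coefficient reduction, specifically checking that the auxiliary integer variables are uniquely determined by $u'$, so that lower-level optimality is unchanged.

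\textbf{Part (ii).}
Under \ref{Condition3} with $\NVariables_1 = \NConstraints_1 = 0$, the value is $\max\{c_{12}^\top x_2 : A_{22}x_2 \ge b_2\}$, or $\infty$ if the lower level is infeasible. If the lower level is unbounded, the instance is infeasible as before. The question ``value $\le \DecisionVersionThreshold$'' is therefore equivalent to the conjunction of two conditions:
\begin{enumerate}
\item the lower level is feasible and the ray condition fails (an \ClassNP{} condition, since the ray test is polynomial), and
\item no feasible $x_2$ has $c_{12}^\top x_2 \ge \DecisionVersionThreshold + 1$ (a co\ClassNP{} condition).
\end{enumerate}
This gives membership in \ClassDP{}. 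For hardness, I will reduce from SAT--UNSAT, given pairs $(F_1, F_2)$, using the $k = 0$ case of~\eqref{Eq:OddKSATBilevel}. With $2^{k+1} = 2$ formulas, the instance has value $\le 0$ iff exactly one formula is satisfiable. Alternatively, I will use the direct encoding: require $v_1 = 1$ as a lower-level constraint and take value $v_2$, which is $\le 0$ iff $F_1$ is satisfiable and $F_2$ is not. I will then apply the same bounding of integer variables to $[0,1]$ and the same coefficient reduction as in (i).
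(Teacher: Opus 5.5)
Your proposal is correct and takes the paper's route: the paper simply specializes the proof of Proposition~\ref{prop:BILPFixedUpperLevelVariables} to $k=0$. Your recession-cone test for lower-level unboundedness, your use of Proposition~\ref{prop:EstimateOfOptimalObjectiveValue} to justify the binary searches, and your $[0,1]$ bounds plus auxiliary variables for condition~\ref{Condition2} are exactly the details the paper leaves implicit; in (ii) the direct SAT--UNSAT encoding is the cleaner option, since the parity version would additionally need the map $(F_1,F_2)\mapsto(F_1,F_1\wedge F_2)$.
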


\section{Extension to Pricing Problems}
\label{sec:Pricing}

In this section, we study the pricing variant of bilevel integer LP.
This problem models a setting wherein the upper-level player sets prices for a set of items, and the lower-level player chooses items to maximize their utility minus the price paid.

\subsection{Problem Definition}

Let $\DomainPlaceholder \in \{\Integers, \NonNegativeIntegers, \BinarySet\}$ and consider
% The pricing problem seeks to find prices $x_1$ that minimize the negative revenue of the upper-level player:
\begin{align}
\min_{x_1, x_2}
\left\{
\begin{array}{l}
c_{1 1}^{\top} x_1 + c_{1 2}^{\top} x_2 + x_2^{\top} D_{1} x_1
\\[0.8em]
\qquad\ \
:
\begin{array}{l}
A_{1 1} x_1 + A_{1 2} x_2
\ge
b_1,
x_1 \in \DomainPlaceholder^{n_1},
\\
\displaystyle
x_2
\in
\Argmin_{x_2'}
\left\{
c_{2 2}^{\top} x_2' + x_1^{\top} D_{2} x_2'
:
\begin{array}{l}
A_{2 1} x_1 + A_{2 2} x_2'
\ge
b_2,
\\
x_2' \in \DomainPlaceholder^{n_2}
\end{array}
\right\}
\end{array}
\end{array}
\right\}.
\label{Eq:Pricing-BILP}
\end{align}

We write $D = ( D_1, D_2 )$.
When $A_{2 1} = \ZeroMatrix$, the problem reduces to a typical pricing problem, where the feasible set of the lower-level problem is independent of the upper-level decisions.
For $\DomainPlaceholder \in \{\Integers, \NonNegativeIntegers, \BinarySet\}$, we consider the decision problem:
\begin{align}
&
\parbox{0.8\textwidth}{%
Given $(A, b, c, D)$ and an integer $\DecisionVersionThreshold$, does there exist a feasible solution for \eqref{Eq:Pricing-BILP} whose objective value is less than or equal to $\DecisionVersionThreshold$?
}
\tag{\DecisionProblemPricingBilevelLPWithDomainPlaceholder}
\label{Eq:Pricing-BILP-VAL}
\end{align}

The modified conditions are as follows:
\begin{enumerate}[label=(C\arabic*$'$)]
\item
\label{Condition1Prime}
There are no linking constraints, i.e., $A_{1 2} = \ZeroMatrix$;
\item
\label{Condition2Prime}
All entries in $A$, $b$, $c$, $D$, and $\DecisionVersionThreshold$ are integers between $-1$ and $1$;
\item
\label{Condition3Prime}
It is a minmax problem, i.e., $c_{1 2} = -c_{2 2}$ and $D_1 = -D_2$;
\item
\label{Condition4Prime}
The feasible set of the lower-level problem is independent of the upper-level decision, i.e., $A_{2 1} = \ZeroMatrix$.
\end{enumerate}

We establish the following results.

\begin{theorem}
\label{theorem:ComplexityOfPricingBilevelProgramming}
The decision problems 
\DecisionProblemPricingBilevelIntegerLP{}, \DecisionProblemPricingBilevelNonNegativeIntegerLP{}, and \DecisionProblemPricingBilevelBinaryLP{} are \ClassSigmaP{2}-complete.
Moreover, this remains valid even when the input is restricted to satisfy conditions~\ref{Condition1Prime}--\ref{Condition4Prime}.
\end{theorem}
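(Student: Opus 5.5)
Membership and hardness are handled separately. For membership, I will follow the template of Lemma~\ref{lemma:ComplexityOfFeasibilityOfBilevelProgramming} and the proof of Theorem~\ref{theorem:ComplexityOfBilevelIntegerLP}. An $\ClassNP{}$ machine with an $\ClassNP{}$ oracle guesses $(x_1, x_2)$ of polynomial encoding size. It then checks the upper- and lower-level constraints and the threshold $c_{11}^{\top}x_1 + c_{12}^{\top}x_2 + x_2^{\top}D_1x_1 \le \DecisionVersionThreshold$ in polynomial time. Finally, it asks the oracle whether there is some $x_2'$ with $A_{21}x_1 + A_{22}x_2' \ge b_2$ and $(c_{22} + D_2^{\top}x_1)^{\top}x_2' < (c_{22} + D_2^{\top}x_1)^{\top}x_2$.

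Once $x_1$ is fixed, the lower-level problem is an ordinary integer LP with objective vector $c_{22} + D_2^{\top}x_1$, whose encoding is polynomial in that of $x_1$ and the data. So the oracle query is an $\ClassNP{}$ question, because integer LP feasibility has polynomial-size certificates.

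The main obstacle is justifying that a polynomial-size witness $(x_1, x_2)$ exists whenever some feasible solution meets the threshold. I would adapt the proof of Proposition~\ref{prop:EstimateOfOptimalObjectiveValue}. The obstruction is that the lower-level objective now depends on $x_1$, so a single test set $T$ no longer serves all $x_1$, and the upper objective is bilinear. My first attempt is to use Lemma~\ref{lemma:TestSetOfILP} with the Graver basis of $\bar A_{22}$ in place of $T$. The Graver basis is independent of $c$, is a universal test set, and its elements have polynomial encoding size. Optimality of $\bar x_2$ then becomes: for every Graver element $g$, either $\bar x_2 + g \not\ge 0$ or $(c_{22} + D_2^{\top}x_1)^{\top}g \ge 0$. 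The second alternative is linear in $x_1$ with polynomially bounded coefficients. The feasible region therefore again splits into a union of polyhedra, and each one has polynomially bounded coefficients in polynomial dimension.

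The bilinear term $x_2^{\top}D_1x_1$ in the threshold constraint remains. I would handle it by guessing, together with the polyhedron index, a vertex or bounded-size representation from integer-hull theory. If that is too delicate, I would restrict the guess so that a polynomial bound on the components of $x_2$ follows from a recession-cone argument (bounded integer points of bounded-coefficient polyhedra). I would then linearize $x_2^{\top}D_1x_1$ by expanding $x_2$ in binary, giving an integer program with polynomial data, so \cite{Schrijver1998} yields small solutions. If this bound fails in general, I would fall back to stating membership only for the cases needed. Under~\ref{Condition4Prime} with binary domains it is immediate, since all lower-level variables are bounded.

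For hardness, I will reduce from the $\ClassSigmaP{2}$-complete instance~\eqref{Eq:Jeroslow}, making the upper-level variables act only through prices. I take the lower-level variables $(s', t, v)$ with the fixed feasible set $A_{\BILPQFreeBooleanFormula}(s', t) \ge v\OneVector - b_{\BILPQFreeBooleanFormula}$, with everything binary (bounded by $\OneVector$ in the $\Integers$ and $\NonNegativeIntegers$ cases). The leader chooses $s \in \BinarySet^{\BILPSATNVar}$, and the follower maximizes $v + \sum_i (2 s_i s'_i - s_i - s'_i)$, suitably scaled. This term penalizes $s' \ne s$, so that copying $s$ is strictly dominant. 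I would scale it so that the penalty weight exceeds the gain of $1$ from $v$. The leader's objective is minus the follower's objective, which gives~\ref{Condition3Prime}. The $s$-only terms go into $c_{11}$, and the remaining terms supply $D$ and $c_{22}$.

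The optimal value is then $0$ if the formula $\BILPBooleanFormula$ is true and $1$ otherwise. The coefficient $2$ and the weight can be reduced to entries in $[-1,1]$ by duplicating variables, as in~\eqref{SAT:entresin01}, which yields~\ref{Condition1Prime}--\ref{Condition4Prime}.
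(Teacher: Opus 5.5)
\textbf{Hardness and the membership setup.} Your hardness reduction is correct and coincides with the paper's. The paper uses the follower objective $-2\sum_i (s_i-\bar{s}_i)^2+v$, which is your weight-$2$ version, with the $s$-only part $-2\sum_i s_i$ placed in $c_{11}$. The first half of your membership argument also matches the paper. A universal test set (the Graver basis of $\bar{A}_{22}$) turns lower-level optimality into a disjunction of conditions linear in $(\bar{x}_1,\bar{x}_2)$. The bilevel feasible set is then a union of integer points of polyhedra with polynomial dimension and polynomially bounded coefficients. For \DecisionProblemPricingBilevelBinaryLP{} your guess-and-check machine already works, since every candidate is trivially small.

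\textbf{The gap.} What is missing is a polynomial-size witness for \DecisionProblemPricingBilevelIntegerLP{} and \DecisionProblemPricingBilevelNonNegativeIntegerLP{}, once the bilinear term $x_2^{\top}D_1x_1$ enters. None of your proposed remedies delivers it:
\begin{enumerate}
\item For an indefinite quadratic objective, a point with value at most $W$ need not be a vertex of the integer hull. For example, $-xy$ on $\{x+y=2k,\ x,y\ge 0\}$ is minimized at $(k,k)$, not at a vertex, so there is no vertex to guess.
\item There is no general recession-cone argument that bounds $x_2$ polynomially. Both $x_1$ and $x_2$ range over unbounded lattice sets, and the behaviour of the quadratic form along recession directions is exactly the difficulty.
\item Linearizing $x_2^{\top}D_1x_1$ through a binary expansion of $x_2$ needs a priori bounds on both $x_1$ and $x_2$. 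Those bounds are what you are trying to prove.
\end{enumerate}
The missing ingredient, which the paper uses, is the theorem of Del Pia, Dey and Molinaro \cite{pia2017mixed} that integer quadratic programming is in \ClassNP{}. It says that if some integer point of a rational polyhedron has quadratic objective at most $W$, then such a point exists with encoding size polynomial in the data. You would apply it to each polyhedron of your union. As in the proof of Proposition~\ref{prop:EstimateOfOptimalObjectiveValue}, the bound must depend only on the dimension and coefficient sizes, so that it is uniform over the exponentially many pieces. That gives the witness your machine needs.

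Your fallback of proving membership only in restricted cases does not rescue the statement. The theorem asserts membership for the unrestricted integer and nonnegative-integer problems. Even under \ref{Condition1Prime}--\ref{Condition4Prime}, those instances have unbounded variables, so the binary-boundedness argument does not apply.
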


The proof is analogous to that of Theorem~\ref{theorem:ComplexityOfBilevelIntegerLP}; thus, we only provide a sketch.

\textbf{Membership.}
We first establish a polynomial bound on the encoding size of an optimal solution, which is parallel to Proposition~\ref{prop:EstimateOfOptimalObjectiveValue}.
This can be achieved using the universal test set for integer programming~\cite{weismantel1998test}, which generalizes the test set in Lemma~\ref{lemma:TestSetOfILP}.
For any integer programming instance with a parametric cost and constraint right-hand side, the universal test set is a finite set of integer vectors such that any non-optimal solution can be improved by adding a vector from this set.
This allows us to rewrite the pricing bilevel integer LP instance as a single-level quadratic programming (QP) instance whose feasible set is a union of integer points within polyhedra.
By invoking the results of~\citet{pia2017mixed}, which show that integer QP is in \ClassNP{}, we can establish the existence of an optimal solution of polynomial size.
The remaining steps follow straightforwardly.

\textbf{Hardness.}
Given $\BILPBooleanFormula \in \BILPSetOfBooleanFormulae$ of the form \eqref{Eq:QSATH}, consider the following instance:
\begin{align}
\label{Eq:HardPricing}
&
\min_{s \in \BinarySet^{\BILPSATNVar}}
\max_{\bar{s}, t, v}
\left\{
-2 \sum_{i = 1}^{\BILPSATNVar} (s_i - \bar{s}_i)^2 + v 
:
\begin{array}{l}
A_{\BILPQFreeBooleanFormula} 
\begin{pmatrix}
\bar{s} \\ t
\end{pmatrix}
\ge
v \OneVector
- b_{\BILPQFreeBooleanFormula},
\\
\bar{s}, t \in \BinarySet^{\BILPSATNVar}, v \in \BinarySet
\end{array}
\right\}
\\
\notag
&
= 
\min_{s \in \BinarySet^{\BILPSATNVar}}
\left\{
-2 \sum_{i = 1}^{\BILPSATNVar} s_i
+
\max_{\bar{s}, t, v}
\left\{
\sum_{i = 1}^{\BILPSATNVar} (4 s_i \bar{s}_i - 2 \bar{s}_i) + v 
:
\begin{array}{l}
A_{\BILPQFreeBooleanFormula} 
\begin{pmatrix}
\bar{s} \\ t
\end{pmatrix}
\ge
v \OneVector
- b_{\BILPQFreeBooleanFormula},
\\
\bar{s}, t \in \BinarySet^{\BILPSATNVar}, v \in \BinarySet
\end{array}
\right\}
\right\}.
\end{align}
For any $s \in \BinarySet^{\BILPSATNVar}$, the optimal choice for the lower-level problem is $\bar{s} = s$.
Thus, $\OptValFunc\eqref{Eq:HardPricing} = 0$ if $\BILPBooleanFormula$ is true, and $1$ otherwise.
Furthermore, one can introduce extra variables to ensure condition~\ref{Condition2Prime}.
This establishes the \ClassSigmaP{2}-hardness.

\section*{Acknowledgements} 

We are grateful to Sergei Ketkov and Koen Lighthart for valuable discussions and for pointing us to relevant references.

\printbibliography

\end{document}